\theoremstyle{plain}
\newtheorem{theorem}{Theorem}[section]
\newtheorem{proposition}[theorem]{Proposition}
\theoremstyle{definition}
\newtheorem{definition}[theorem]{Definition}
\newtheorem{example}[theorem]{Example}
\newtheorem{remark}[theorem]{Remark}
\title[From annular to toroidal pseudo knots ]
 {From annular to toroidal pseudo knots }
\author{Ioannis Diamantis}
\address{Department of Data Analytics and Digitalisation,
Maastricht University, School of Business and Economics,
P.O.Box 616, 6200 MD, Maastricht,
The Netherlands.}
\email{i.diamantis@maastrichtuniversity.nl}
\author{Sofia Lambropoulou}
\address{School of Applied Mathematical and Physical Sciences, National Technical University of Athens, Zografou campus, GR-15780 Athens, Greece.}
\email{sofia@math.ntua.gr}
\urladdr{http://www.math.ntua.gr/~sofia}
\author{Sonia Mahmoudi}
\address{Advanced Institute for Materials Research, Tohoku University, 2-1-1 Katahira, Aoba-ku, Sendai 980-8577, Japan; RIKEN iTHEMS, 2-1 Hirosawa, Wako, Saitama 351-0198, Japan}
\email{sonia.mahmoudi@tohoku.ac.jp}
\subjclass[2020]{57K10, 57K12, 57K14, 57K35}
\keywords{pseudo knots, annulus, torus, solid torus, thickened torus, pseudo knot isotopy, lift, mixed links, equivalence moves, weighted resolution set}
\date{}
\thanks{}
\begin{document}

\setcounter{section}{-1}

\begin{abstract}
In this paper, we extend the theory of planar pseudo knots to the theories of annular and toroidal pseudo knots. 
Pseudo knots are defined as equivalence classes under Reidemeister-like moves of knot diagrams characterized by crossings with undefined over/under information.
In the theories of annular and toroidal pseudo knots we introduce their respective lifts to the solid and the thickened torus. Then, we interlink these theories by representing annular and toroidal pseudo knots as planar ${\rm O}$-mixed and ${\rm H}$-mixed pseudo links.
We also explore the inclusion relations between planar, annular and toroidal pseudo knots, as well as of ${\rm O}$-mixed and ${\rm H}$-mixed pseudo links.
Finally, we extend the planar weighted resolution set to annular and toroidal pseudo knots, defining new invariants for classifying pseudo knots and links in the solid and in the thickened torus. 
\end{abstract}

\maketitle


\section{Introduction}\label{sec:0}

Knots and links have been a central focus in topology, with significant implications across various fields, including biology, chemistry, and physics. In classical knot theory, knots are typically studied through their projections on a plane, where crossings are assigned over/under information to fully capture the knot's structure. However, there are instances where this information is either unavailable or intentionally omitted. This leads to the concept of pseudo knots, which are projections of knots with some crossings left undetermined.

Pseudo knot diagrams  were introduced by Hanaki in \cite{H} as knot projections on the 2-sphere with certain double points lacking over/under information, the \textit{precrossings} or \textit{pseudo crossings}. This model is motivated by the study of DNA knots, where distinguishing over and under crossings is often unfeasible, even with electron microscopy. The theory of pseudo knots is the study of equivalence classes of pseudo diagrams under certain local combinatorial moves which extend the well-known Reidemeister moves for classical knots by taking into account also the precrossings (cf. \cite{HJMR} for further details).  

In this paper, we extend the theory of pseudo knots beyond their classical planar setting by considering them on two additional surfaces: the annulus and the torus. These surfaces are of particular interest because of their inherent rotational and reflectional symmetries, while maintaining  direct connections to the plane and between them, due to \textit{inclusion relations}. 

We study annular and toroidal pseudo knots by exploiting the theory of planar pseudo knots and by establishing interconnections due to various  inclusion relations: of a disc in the annulus and in the torus and of an annulus in the torus. We further introduce the notion of the \textit{lift} of a planar, annular or toroidal pseudo knot diagram into a closed  curve with rigid pseudo crossings, in three-space, in the solid torus  (cf. also \cite{D}) or in the thickened torus, respectively. We also introduce the notion of isotopy for the lift of a pseudo knot, establishing bijections of the corresponding theories with the planar, annular and toroidal pseudo knot theories (Proposition~\ref{prop:planarlift}, Theorem~\ref{isopST}, and Theorem~\ref{isoptt}): 
\bigbreak

\noindent {\bf Theorem 1.} 
\textit{Two  pseudo links in the three-sphere, in the solid torus or in the thickened torus, respectively, are isotopic if and only if any two corresponding planar, annular or  toroidal pseudo link diagrams of theirs, projected onto the plane, the annulus or the outer toroidal boundary, respectively, are pseudo Reidemeister equivalent. }
\bigbreak

Thanks to the lifts of planar, annular and toroidal pseudo knot diagrams we exploit further inclusion relations: of a three-ball in the solid torus and in the thickened torus, of a solid torus in the thickened torus, and of the solid torus and the thickened torus in a three-ball. The inclusion relations reflect the inherent symmetries of the geometrical objects. The notions of lifts for annular and  toroidal pseudo knots enable us to establish further connections of these theories to \textit{mixed link theories}: in the first case to ${\rm O}$-mixed pseudo links in $S^{3}$, which are pseudo links that contain a point-wise fixed unknotted component representing the complementary solid torus  (cf. also \cite{D}), in the second case to ${\rm H}$-mixed pseudo links in $S^{3}$, which are pseudo links that contain a point-wise fixed Hopf link as a sublink, whose complement is a  thickened torus (Theorem~\ref{isopmixed} and Theorem~\ref{isopmixedtt}): 

\bigbreak
\noindent {\bf Theorem 2.} 
\textit{Isotopy classes of  pseudo links in the solid torus, resp. in the thickened torus are in bijection with isotopy classes of  ${\rm O}$-mixed pseudo links resp. ${\rm H}$-mixed pseudo links in $S^{3}$, via isotopies that keep ${\rm O}$ resp. ${\rm H}$ point-wise fixed.}
\bigbreak

Based on the above, we further translate the spatial isotopies at the diagrammatic level through generalized Reidemeister theorems for ${\rm O}$-mixed pseudo links (Theorem–\ref{Omixedreid}) resp. ${\rm H}$-mixed pseudo links and relate to the corresponding pseudo Reidemeister equivalences (Theorems–\ref{Omixedreid} and~\ref{Hmixedreid}).
 Finally, we define the invariant \textit{weighted resolution sets} (WeRe sets) for annular and toroidal pseudo knots, which are sets of associated knots in the solid and thickened torus, respectively, obtained by assigning to each pseudo crossing  over or under information. Similarly, the ${\rm O}$-WeRe set and ${\rm H}$-WeRe set are also introduced for ${\rm O}$-mixed and ${\rm H}$-mixed pseudo links, which are sets of associated ${\rm O}$-mixed and ${\rm H}$-mixed links, respectively. Subsequently we have (Theorems~\ref{th:annularWeRe} and~\ref{th:toroidalWeRe}):

\bigbreak
\noindent {\bf Theorem 3.} 
\textit{The annular resp. toroidal WeRe set is an invariant of annular resp. toroidal pseudo links. Subsequently, any invariant of links in the solid torus (resp. of links in the thickened torus), applied on the elements of an  annular WeRe set (resp. of a toroidal  WeRe set), induces also an invariant set of the annular  pseudo link (resp. of the toroidal  pseudo link).  Similarly,  the ${\rm O}$-WeRe set resp. the   ${\rm H}$-WeRe set is an invariant of ${\rm O}$-mixed pseudo links resp.  ${\rm H}$-mixed pseudo links. Subsequently, any invariant  of ${\rm O}$-mixed links (resp.  of ${\rm H}$-mixed links), applied on the elements of  an ${\rm O}$-WeRe set (resp. of  an ${\rm H}$- WeRe set), induces also an invariant set of the ${\rm O}$-mixed pseudo link (resp. of the ${\rm H}$-mixed pseudo link).}
\bigbreak

Apart from the interest in studying annular and toroidal pseudo knots per se, another motivation for us is their relation to periodic tangle diagrams in a ribbon and in the plane, respectively,  through imposing one and two periodic boundary conditions by means of corresponding covering maps. For further details in periodic tangles, cf. for example \cite{DLM1,DLM2,DLM3} and references therein. 

\smallbreak

The paper is organized as follows. We begin, in \S~\ref{sec:prel}, by recalling the basic notions associated with planar pseudo knots, including the pseudo Reidemeister equivalence, the lift in three-dimensional space and the WeRe set. 

In \S~\ref{sec:annular} we extend the theory from planar to annular pseudo knots. We first present the annular pseudo Reidemeister equivalence. We then define the lifts of  annular pseudo knots in the solid torus and their isotopy moves, which lead to their representation as planar ${\rm O}$-mixed pseudo links. We also explore the inclusion relations between annular, planar and ${\rm O}$-mixed pseudo links. The section is concluded with the extension of the WeRe set for annular pseudo knots, providing a new invariant for their classification. 

In \S~\ref{sec:toroidal} we introduce the theory of toroidal pseudo knots.  We first define the  pseudo Reidemeister equivalence moves for toroidal pseudo knots. We then define the lift of toroidal pseudo knots into a closed pseudo curve in the thickened torus and the notion of  isotopy for such curves. The lift of toroidal pseudo knots leads to their representation as planar ${\rm H}$-mixed pseudo links. We also explore the inclusion relations between toroidal, annular and planar pseudo knots, as well as of ${\rm O}$-mixed and ${\rm H}$-mixed pseudo links. We conclude this section with the extension of the WeRe set for toroidal pseudo knots, defining new invariants for classifying pseudo knots and links in the thickened torus. 

The transition from planar to annular and then to toroidal setting introduces increasing complexity  due to the topological properties and symmetries of the ambient spaces. This study provides deeper insights into the interactions among planar, annular and toroidal pseudo knots.


\section{Classical Pseudo Knots}\label{sec:prel}

The classical by now theory of pseudo knots and links was introduced by Hanaki in \cite{H} and the mathematical background of pseudo knot theory was established in \cite{HJMR}. In this section, we recall the basics of pseudo knots and links in the plane, their equivalence relation and the weighted-resolution invariant set (WeRe set). We also introduce the notion of lift of a planar pseudo link in the 3-space. 

\subsection{} \label{planarpk}
A {\it planar pseudo knot/link diagram} or simply {\it pseudo knot/link diagram} consists in a regular knot or link diagram in the plane where some crossing information may be missing, that is, it is unknown which strand passes over and which strand passes under the other. These undetermined crossings are called {\it precrossings} or {\it pseudo crossings} and are depicted as transversal intersections of arcs of the diagram enclosed in a light gray circle (for an illustration see Figure~\ref{pk1}). Assigning an orientation to  each component of a pseudo link diagram results in an {\it oriented } pseudo link diagram. 

\begin{figure}[H]
\begin{center}
\includegraphics[width=1in]{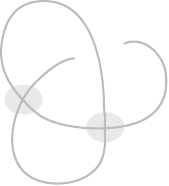}
\end{center}
\caption{A pseudo knot.}
\label{pk1}
\end{figure}

A {\it planar pseudo link} is defined as an equivalence class of pseudo link diagrams under planar isotopy and all versions of the classical Reidemeister moves $R_1, R_2, R_3$ and the {\it pseudo Reidemeister moves} $PR_1, PR_2, PR_3, PR_3^\prime$, as exemplified in Figure~\ref{reid}, all together comprising the {\it Reidemeister equivalence for pseudo links}. For an oriented Reidemeister equivalence we require also orientations to be preserved, via the oriented versions of the moves comprising the Reidemeister equivalence.

\begin{figure}[ht]
\begin{center}
\includegraphics[width=6.2in]{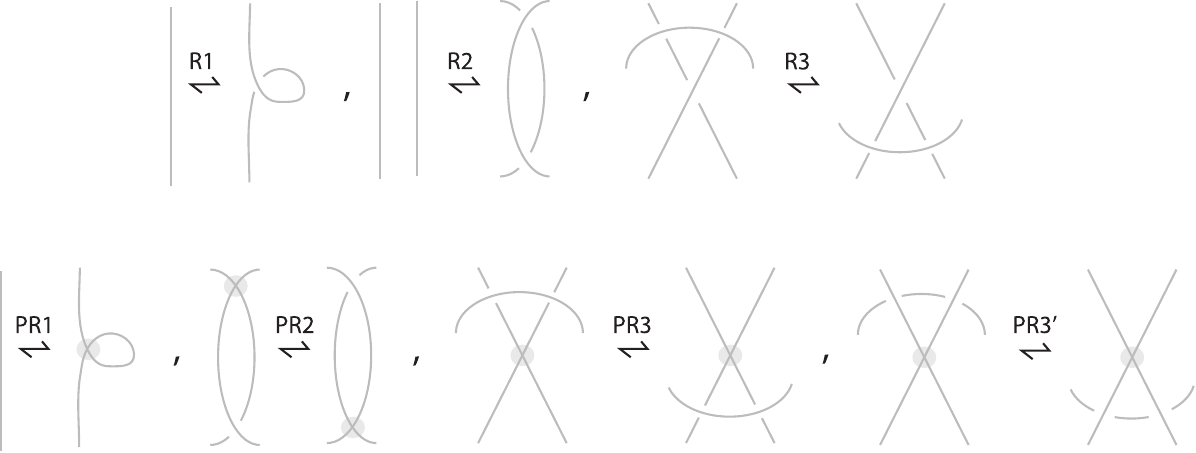}
\end{center}
\caption{Reidemeister equivalence moves for pseudo link diagrams.}
\label{reid}
\end{figure}

\begin{remark}
Consider a pseudo link diagram $K$ with no precrossings. Then, $K$ can be viewed as a classical link diagram and classical Reidemeister equivalence is preserved by pseudo link equivalence. Therefore, there is an injection of classical link types into the set of pseudo link types. 
\end{remark}

\begin{remark}
As mentioned in \cite{BJW}, pseudo links are closely related to {\it singular links}, that is, links that contain a finite number of rigid self-intersections. In particular, there exists a bijection $f$ from the set of singular link diagrams to the set of pseudo link diagrams where singular crossings are mapped to precrossings. In that way we may also recover all of the pseudo Reidemeister moves, with the exception of the pseudo Reidemeister I move (PR1). Hence, $f$ induces an onto map from the set of singular links to the set of pseudo links, since the images of two equivalent singular link diagrams are also equivalent pseudo link diagrams with exactly the same sequence of corresponding pseudo Reidemeister moves, and every pseudo link type is clearly covered.
\end{remark}


\subsection{The lift of a planar pseudo link in 3-space}

We now introduce the lift of a pseudo link diagram as (a collection of) curve(s) in three-dimensional space whose regular projections are pseudo link diagrams.

\begin{definition} \label{spatiallift}
 The {\it lift} of a planar pseudo link diagram in three-dimensional space is defined as follows: every classical crossing is embedded in a sufficiently small 3-ball so that the over arc is embedded in its upper boundary and the under arc is embedded in its lower boundary, while the precrossings are supported by sufficiently small rigid discs, which are embedded in three-space.  By lifting the precrossings in rigid discs in three-space we preserve the pseudo link's essential structure while respecting the `ambiguity' of its precrossings. The simple arcs connecting crossings can be also replaced by isotopic ones in three-space. The resulting lift, called  {\it spatial pseudo links}, is a collection of closed curve(s) in three-space consisting in embedded discs from which emanate embedded arcs.    
\end{definition}

\begin{figure}[ht]
\begin{center}
\includegraphics[width=3in]{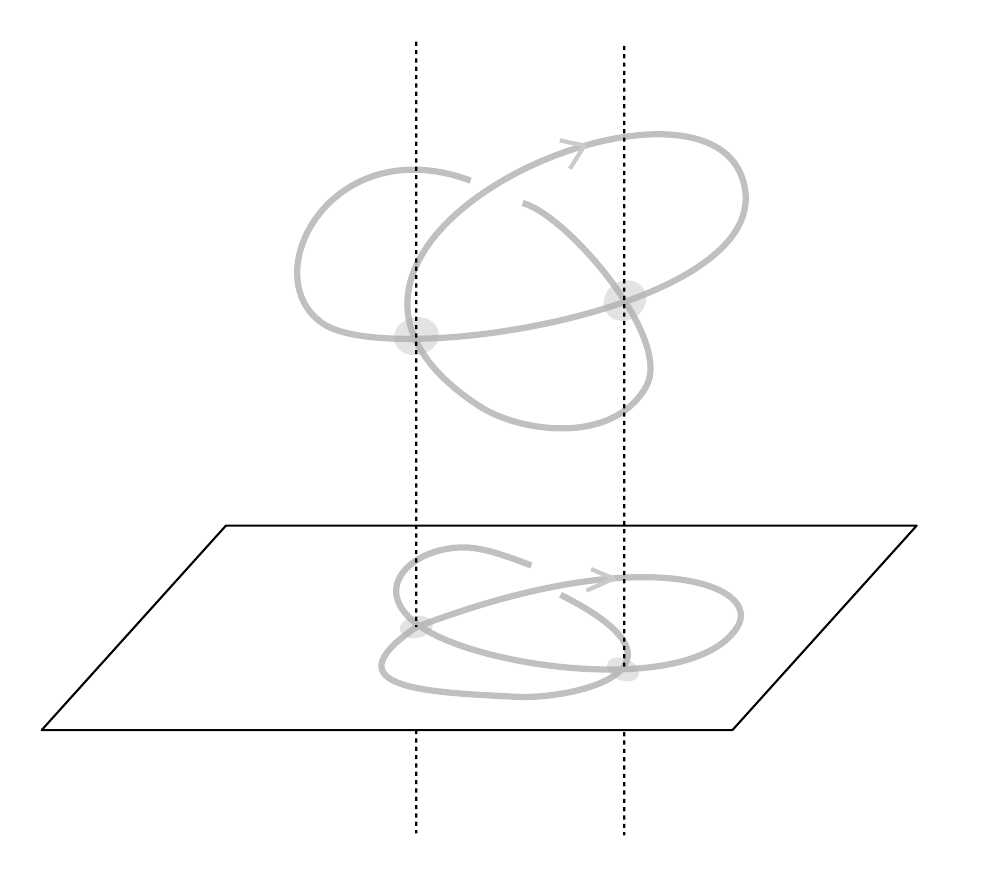}
\end{center}
\caption{The lift of a pseudo knot diagram to a spatial pseudo knot.}
\label{trefoil-lifting}
\end{figure}

Clearly, any regular projection of a spatial pseudo link, whereby a disc supporting a pseudo crossing does not project on an arc, is a planar pseudo link diagram. 

\begin{definition}
Two (oriented) spatial pseudo links are said to be {\it isotopic}
 if they  are related  by  arc and disc isotopies. 
\end{definition}

In the above context,  one can easily derive the following.

\begin{proposition}\label{prop:planarlift}
Two  (oriented) spatial pseudo links are  isotopic if and only if any two corresponding planar pseudo link diagrams of theirs are (oriented) Reidemeister equivalent. 
\end{proposition}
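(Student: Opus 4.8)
The plan is to establish the equivalence in the two directions dictated by the statement, using the standard Reidemeister-type philosophy adapted to the rigid-disc formalism of Definition~\ref{spatiallift}. The main observation is that the three local ingredients of a spatial pseudo link---embedded arcs, embedded discs supporting precrossings, and small $3$-balls supporting classical crossings---each admit a controlled dictionary between their ambient isotopies and planar diagrammatic moves.

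First I would prove the ``only if'' direction. Suppose two spatial pseudo links $L_1,L_2$ are isotopic, i.e.\ related by a finite sequence of arc and disc isotopies. I would fix a generic projection direction throughout and argue that we may choose the ambient isotopy to be \emph{generic}, in the sense that at all but finitely many times the projection of the moving link is a legitimate pseudo link diagram (no disc projecting to an arc, crossings transversal, no triple points, no tangencies). This is the usual general-position argument; the only new feature is that the supporting discs are rigid, so one must check that a rigid disc passing through a ``bad'' position contributes only the moves $PR_1,PR_2,PR_3,PR_3^{\prime}$ (a strand sliding across the disc, two strands crossing near the disc, a strand passing over/under the whole disc). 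The classical crossings contribute $R_1,R_2,R_3$ in the standard way, and arc isotopies away from discs and crossings contribute planar isotopy. Subdividing the isotopy at the finitely many critical times and reading off the induced local change on the projected diagram exhibits the two planar diagrams as Reidemeister equivalent for pseudo links.

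For the ``if'' direction I would go the other way: given that two planar pseudo link diagrams $D_1,D_2$ of $L_1,L_2$ are related by a single move from the Reidemeister equivalence for pseudo links, I would realize that move by an explicit ambient isotopy of the corresponding spatial pseudo link supported in a small ball (for $R_1,R_2,R_3$, by the classical realizations, using the ball-with-upper/lower-boundary model of a classical crossing) or in a small neighbourhood of the relevant disc (for $PR_1,PR_2,PR_3,PR_3^{\prime}$, by a disc isotopy together with arc isotopies that respect rigidity of the disc). Planar isotopy of diagrams lifts to arc isotopy of the spatial picture. Concatenating these realizations over the given finite sequence of moves yields an isotopy between $L_1$ and $L_2$. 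One also needs that \emph{any} two planar diagrams of the same spatial pseudo link are Reidemeister equivalent; but this follows from the ``only if'' direction applied to the identity isotopy together with a change of generic projection, which again is handled by a general-position sweep producing a finite sequence of the listed moves. The oriented case is identical, carrying orientations along verbatim.

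The step I expect to be the main obstacle is the rigidity bookkeeping in the general-position argument: unlike a flexible crossing, a rigid supporting disc cannot be perturbed freely, so one must verify that every codimension-one degeneracy of the projection of a moving rigid disc is either avoidable by an arc isotopy or else is exactly one of $PR_1,PR_2,PR_3,PR_3^{\prime}$, with no extra moves needed and none of these moves changing the pseudo link type of the lift. Once this local classification is in place, the global argument is the routine subdivision of the isotopy parameter, and the proposition follows.
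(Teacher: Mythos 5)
The paper itself offers no written proof of Proposition~\ref{prop:planarlift} (it is stated as something ``one can easily derive''), and for most of the moves your general-position sketch is exactly the argument one would expect: generic projections of arc/disc isotopies yield $R_1,R_2,R_3$, planar isotopy, and the moves in which a strand passes a precrossing disc, and conversely those moves lift to local isotopies. However, there is one genuine gap, and it sits precisely at the point you flagged as ``rigidity bookkeeping'': the move $PR_1$. The move $PR_1$ removes (or creates) a precrossing together with its kink, so on the level of lifts it removes an entire rigid supporting disc. An isotopy of a spatial pseudo link, as defined in the paper, consists of arc and disc isotopies and therefore preserves the number of precrossing discs. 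Consequently your claim in the ``if'' direction that $PR_1$ is ``realized by a disc isotopy together with arc isotopies that respect rigidity of the disc'' cannot be correct as written: the lifts of the two sides of a $PR_1$ move have different numbers of discs, so no arc-and-disc isotopy connects them. (This is the same phenomenon behind the paper's remark that the singular-link correspondence recovers all pseudo Reidemeister moves \emph{except} $PR_1$: a rigid double point in a kink cannot be undone by rigid-vertex isotopy.) Symmetrically, in the ``only if'' direction, a rigid disc passing through a non-generic position never produces $PR_1$ either --- the codimension-one events involving a disc are of the $PR_3/PR_3^{\prime}$ type (a strand passing the disc), tangencies near the disc, and rotations of the disc; listing $PR_1$ among them is inaccurate, though this does not damage that direction.

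To repair the argument you must either (a) make explicit an enlargement of the notion of isotopy for spatial pseudo links that permits creation/annihilation of a disc whose two emanating strands are joined in a kink (i.e.\ a spatial counterpart of $PR_1$, stated as an extra local move on lifts), and then prove the proposition for that equivalence; or (b) argue directly that the lifts of two diagrams differing by a $PR_1$ move are equivalent under whatever relation the statement intends --- but with the definitions as given (arc and disc isotopies only), option (b) fails by the disc-count obstruction. In short, your treatment of the classical moves, of $PR_3,PR_3^{\prime}$, and of the change-of-projection argument is sound and is presumably what the authors had in mind, but the $PR_1$ step needs to be isolated and handled by an explicit additional convention or move on the spatial side rather than absorbed into ``disc isotopy.''
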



\subsection{The weighted resolution set of pseudo links}

In this subsection we recall an invariant of pseudo links, the weighted resolution set, through their  resolution sets \cite{HJMR}. A {\it resolution} of a pseudo link diagram \( K \) is a specific assignment of a crossing  (over  or under) for every precrossing in \( K \), for an illustration see Figure~\ref{fig:weretr}. We then have the following:

\begin{definition}\label{wrs}\rm
The {\it weighted resolution set} or {\it WeRe set}, of a planar pseudo link diagram \( K \) is a collection of ordered pairs \((K_i, p_{K_i})\), where \( K_i \) represents a resolution of \( K \), and \( p_{K_i} \) denotes the probability of obtaining from \( K \) the equivalence class of \( K_i \)  through a random assignment of crossing types, with equal likelihood for positive and negative crossings.
\end{definition}

It can be easily confirmed that the WeRe set is preserved under the equivalence moves for pseudo links \cite{HJMR}. Hence, we have:

\begin{theorem}\label{wereinv}
The WeRe set is an invariant of planar pseudo links. Subsequently, any classical invariant applied on the elements of the  WeRe set induces also an invariant set of the planar pseudo link.
\end{theorem}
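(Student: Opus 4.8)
The plan is to verify that the WeRe set is unchanged whenever $K$ is modified by a single move from the Reidemeister equivalence for pseudo links (planar isotopy, $R_1,R_2,R_3$, and $PR_1,PR_2,PR_3,PR_3'$); invariance under arbitrary sequences then follows by induction on the number of moves. The key observation is that a resolution of a pseudo link diagram replaces each precrossing by a classical crossing, and that this assignment procedure commutes, in an appropriate sense, with the equivalence moves. Concretely, I would set up a correspondence between resolutions of $K$ and resolutions of $K'$ (where $K'$ is obtained from $K$ by one move) and check that corresponding resolutions are classically Reidemeister equivalent, and that the induced probabilities match.

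First I would dispose of the moves that do not involve any precrossing: planar isotopy and the classical moves $R_1,R_2,R_3$. If $K\to K'$ is such a move, it takes place in a disc containing only classical crossings, so the set of precrossings of $K$ and of $K'$ is literally the same; hence there is a canonical bijection between resolutions, $K_i \leftrightarrow K_i'$, with identical weights $2^{-n}$ summed over each equivalence class. Moreover $K_i'$ is obtained from $K_i$ by exactly the same classical move (now legitimately applied, since all crossings are resolved), so $K_i$ and $K_i'$ are classically Reidemeister equivalent and thus determine the same element of the WeRe set. Therefore the collection of pairs $(K_i,p_{K_i})$ is unchanged.

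Next I would treat the pseudo Reidemeister moves $PR_1,PR_2,PR_3,PR_3'$, which is where the genuine content lies. For $PR_2$ (adding/removing two precrossings of opposite nature between two arcs) and for $PR_3,PR_3'$ (sliding a strand past a precrossing, or the triangle moves involving one or more precrossings), the precrossings in the local picture are in bijection between $K$ and $K'$ except for those created or destroyed; I would pair up a resolution of $K$ with the resolution of $K'$ that makes the \emph{same} choices on the common precrossings, and, in the $PR_2$ case, note that for each of the $4$ choices on the two new precrossings the resolved local tangle is classically trivial (a genuine $R_2$ or a trivial braiding), so each resolution of $K$ corresponds to a set of resolutions of $K'$ of total probability equal to $p_{K_i}$, all classically equivalent to $K_i$. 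For $PR_3$ and $PR_3'$ the bijection preserves the number of precrossings, and one checks that each pair of corresponding resolutions differs by a classical $R_3$ (or $R_2$), hence is classically equivalent; the weights match term by term. The move $PR_1$ (adding a kink with a precrossing) is handled similarly: both resolutions of the new precrossing give a classical $R_1$ kink, so the single resolution $K_i$ of $K$ corresponds to two resolutions of $K'$ each classically equal to $K_i$, together carrying probability $p_{K_i}$.

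The main obstacle is purely bookkeeping rather than conceptual: one must be careful that the pairing of resolutions is well-defined at the level of equivalence classes (different diagram-level resolutions can land in the same class, so probabilities must be summed, not compared individually) and that no resolution of $K'$ is missed or double-counted, especially for $PR_2$ and $PR_1$ where the precrossing count changes and a probability-$2^{-n}$ event on the $K$ side splits into two probability-$2^{-(n+1)}$ events on the $K'$ side. Once the pairing and the matching of weights are checked for each of the finitely many local models (and their oriented versions), the induction closes and the WeRe set is an invariant; for the second sentence, if $\mathcal I$ is any classical link invariant, then applying $\mathcal I$ entrywise to the (now well-defined) WeRe set yields the multiset $\{(\mathcal I(K_i),p_{K_i})\}$, which is determined by the WeRe set and hence is itself an invariant of the planar pseudo link. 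This completes the proof.
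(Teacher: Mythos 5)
Your overall strategy is the right one, and it is essentially the argument the paper leans on (the paper itself gives no proof here, deferring to \cite{HJMR}): check invariance move by move, pair resolutions of $K$ with resolutions of $K'$, verify that paired resolutions are classically Reidemeister equivalent, and sum diagram-level probabilities over equivalence classes. Your treatment of planar isotopy, $R_1,R_2,R_3$, of $PR_1$, and of the $PR_3$-type moves is correct, as is the concluding observation that applying a classical invariant entrywise to a well-defined WeRe set again yields an invariant.

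The genuine problem is your $PR_2$ case. You describe $PR_2$ as ``adding/removing two precrossings of opposite nature between two arcs'' and claim that all four resolutions of the two new precrossings give a classically trivial local tangle. That claim is false: precrossings carry no over/under data (so ``opposite nature'' has no meaning), and of the four resolutions of such a bigon only two cancel by a classical $R_2$; the other two form a clasp, which in general changes the resolved link type (e.g.\ it shifts the linking number of the two components involved by $\pm 1$). So the move you verified would \emph{not} preserve the WeRe set --- and precisely for this reason no $R_2$-type move that creates or cancels precrossings is part of the pseudo Reidemeister equivalence. As the paper's remark on the correspondence with singular links indicates, every pseudo move except $PR_1$ comes from a singular Reidemeister move, hence $PR_2$, $PR_3$, $PR_3'$ leave the set of precrossings unchanged; the correct check for $PR_2$ is therefore of the same kind as the one you carried out for $PR_3,PR_3'$: resolutions correspond bijectively, preserving weights, and corresponding resolutions differ by classical $R_2/R_3$ moves. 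As written, one case of your case analysis verifies a move that is not in Figure~\ref{reid} by an incorrect computation, so that case must be redone against the actual $PR_2$ move before the induction closes.
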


\begin{example}\label{ex1}
Consider the pseudo trefoil knot of Figure~\ref{pk1}. In Figure~\ref{fig:weretr} we illustrate the resolution set of this pseudo knot resulting in the following WeRe set: 
$$
\big{\{}({\rm trefoil\ knot, 1/4}), \, ({\rm unknot}, 3/4) \big{\}}
$$
Further, applying the Jones (or, equivalently, the normalized Kauffman bracket) polynomial for classical knots to the WeRe set we obtain the invariant set: $\big{\{}( t + t^3 - t^4, 1/4), \, (1, 3/4) \big{\}}$.

\begin{figure}[H]
    \centering
    \includegraphics[width=0.6\linewidth]{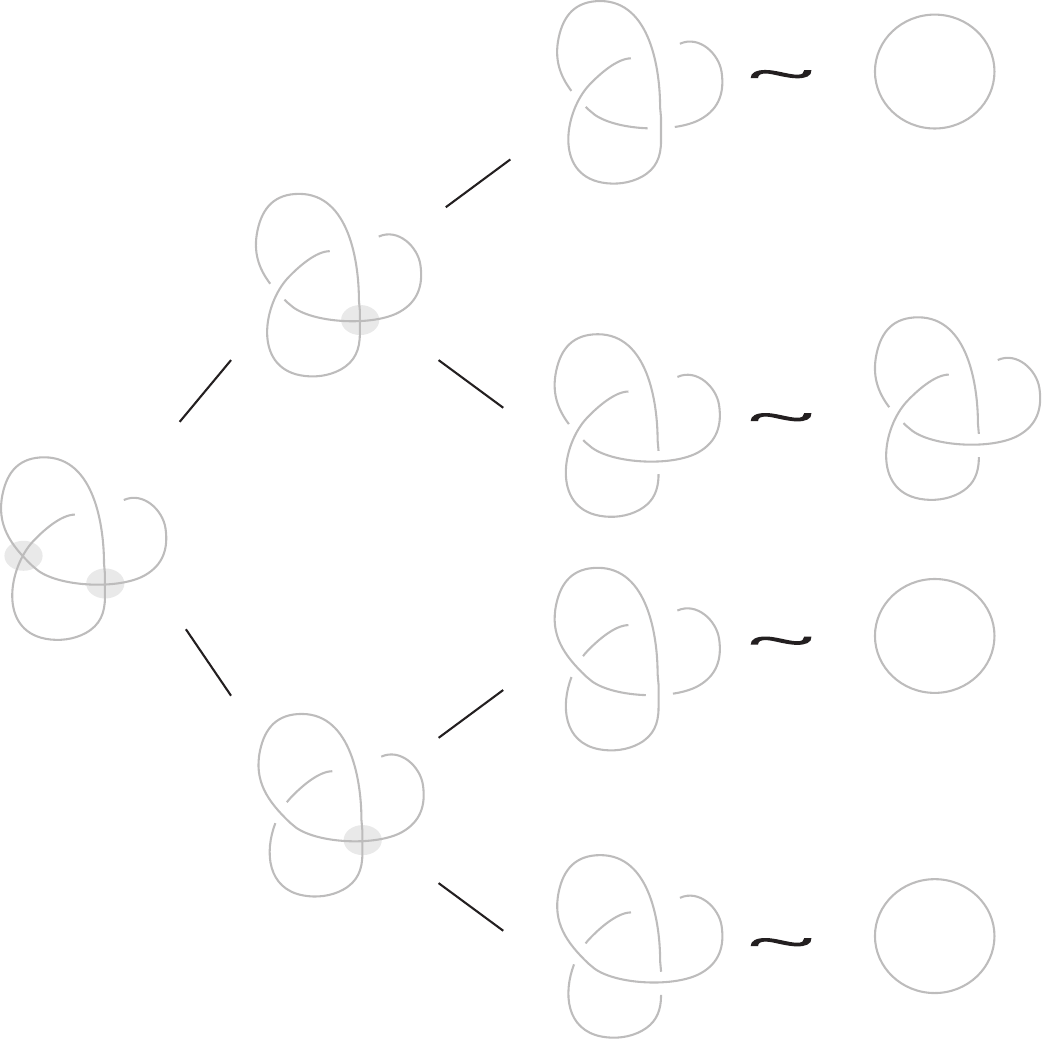}
    \caption{The resolution set of a pseudo knot.}
    \label{fig:weretr}
\end{figure}
\end{example}

\begin{remark} \label{differentprobabilities}
Consider the pseudo trefoil knot again, but this time 
 let us modify the resolution probabilities, assigning different probabilities to each precrossing. In this case, the resolution set will remain the same but the weighted resolution set will change, having a different probability distribution of the same classical links. Hence, we conclude that a pseudo link diagram $L$ with altered  resolution probabilities may lead to non-equivalent weighted resolution sets.
\end{remark}


\section{Annular pseudo knot theory}\label{sec:annular}

In this section we introduce and study the theory of annular pseudo knots and links, that is, pseudo link diagrams in the interior of the annulus $\mathcal{A}$, subjected to Reidemeister-like moves. The annulus $\mathcal{A}$ is the space $ S^1 \times D^1$ with two circular boundary components, and may be also represented as an once punctured disc, meaning a disc with a hole at its center. 
 We  study annular pseudo links in different geometric contexts, such as through their lifts to closed curves with precrossings in the solid torus and through their representations as planar ${\rm O}$-mixed pseudo links, that is, pseudo links with one fixed unknotted component. 
 We also define the invariant WeRe set for an annular pseudo link, which is a set of associated links in the solid torus, and the ${\rm O}$-WeRe set for an ${\rm O}$-mixed pseudo link, which is a set of associated ${\rm O}$-mixed  links. Finally, we discuss the relations of annular pseudo links with planar ones and with links in the solid torus, through inclusion relations.

\begin{definition}
An {\it annular pseudo knot/link diagram} consists in a regular knot or link diagram in the interior of the annulus $\mathcal{A}$, where some crossing information may be missing, in the sense that it is unknown which strand passes over the other. As in the classical case, these undetermined crossings are called {\it precrossings} or {\it pseudo crossings} and are depicted as transversal intersections of arcs of the diagram enclosed in a light gray circle. Assigning an orientation to each component of an annular pseudo link diagram results in an {\it oriented } annular pseudo link diagram. 
\end{definition}

In Figure~\ref{pkannulus} we illustrate an annular pseudo link diagram in $\mathcal{A}$. In particular, observe that this pseudo link diagram contains two components: a null-homologous loop linked to an essential locally knotted loop winding twice along the meridian.

\begin{figure}[H]
\begin{center}
\includegraphics[width=2.4in]{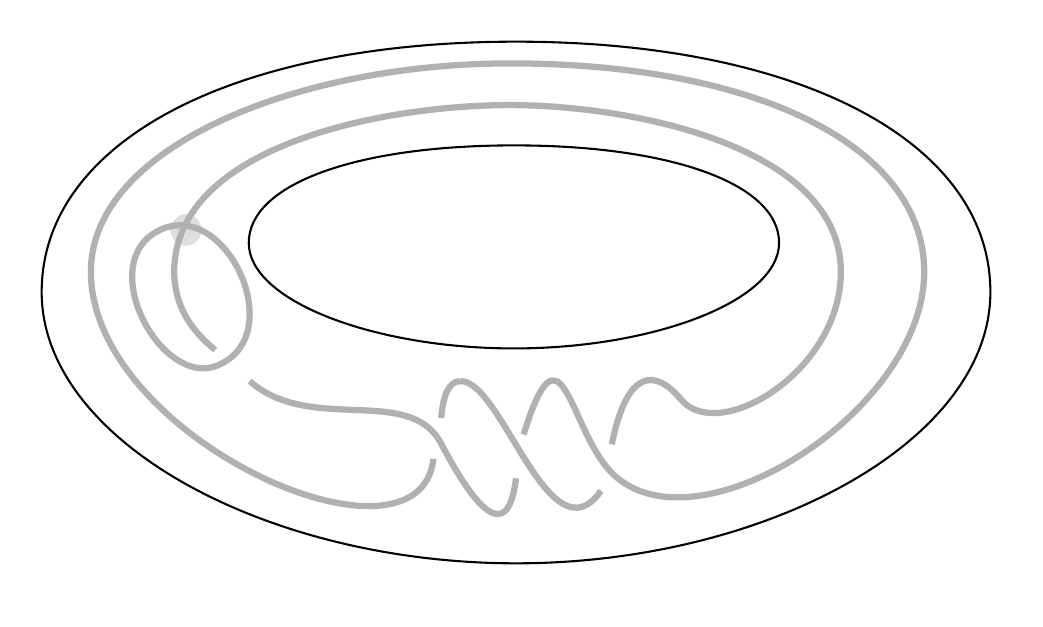}
\end{center}
\caption{An annular pseudo link.}
\label{pkannulus}
\end{figure}

\begin{definition}\label{anpl}
An {\it annular pseudo link} is defined as an equivalence class of annular pseudo link diagrams under surface isotopy and all versions of the classical Reidemeister moves and the extended pseudo Reidemeister moves, as exemplified in Figure~\ref{reid}, all together comprising the {\it Reidemeister equivalence} for annular pseudo links. As for classical pseudo links, for an oriented Reidemeister equivalence we require also orientations to be preserved, via the oriented versions of the moves comprising the Reidemeister equivalence.
\end{definition}

\begin{remark}
As in the classical case, one may relate the theory of annular pseudo links to the theory of {\it annular singular links} via a bijection from the set of annular singular link diagrams to the set of annular pseudo link diagrams. This bijection maps singular crossings to precrossings. 
 The mapping carries through to all of the pseudo Reidemeister moves, with the exception of the pseudo Reidemeister I move (PR1). Hence, we have an onto map from the set of annular singular links to the set of annular pseudo links, since the images of two equivalent annular singular link diagrams are also equivalent annular pseudo link diagrams with exactly the same sequence of corresponding pseudo Reidemeister moves, and every annular pseudo link type is clearly covered.
\end{remark}


\subsection{The lift of annular pseudo links in the solid torus}\label{sec:lift-annulus}

In this section we define the lift of annular pseudo links in the solid torus similarly to the lift of pseudo links in three-dimensional space (recall Definition~\ref{spatiallift}) but with additional constraints. We consider the thickening $\mathcal{A} \times I$, where  $I$ denotes the unit interval, $I=[0,1]$ (see Figure~\ref{annularlifting}). Note that the  space $\mathcal{A} \times I$ is homeomorphic to the solid torus, ST.

\begin{definition} \label{solidlift}
The {\it lift} of an annular pseudo link diagram in the thickened annulus $\mathcal{A} \times I$ is defined so that: each classical crossing is embedded in a sufficiently small 3-ball that lies entirely within the thickened annulus, precrossings are supported by sufficiently small rigid discs, which are embedded in the thickened annulus, and the simple arcs connecting the crossings can be replaced by isotopic ones in the thickened annulus. The lifting of the precrossings within these rigid discs  maintains the pseudo link's essential structure while preserving the `ambiguity' of its precrossings. The resulting lift is called a {\it  pseudo link in the solid torus} and it is a collection of closed curve(s) constrained to the interior of the solid torus, consisting in embedded discs from which emanate embedded arcs.
\end{definition}

\begin{figure}[ht]
\begin{center}
\includegraphics[width=3in]{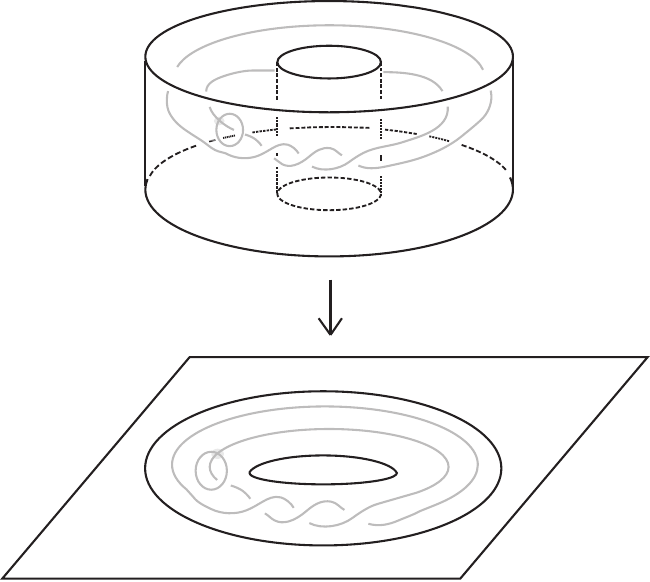}
\end{center}
\caption{The lift of an annular pseudo link to a  pseudo link in $\mathcal{A} \times I$.}
\label{annularlifting}
\end{figure}

\begin{definition} \label{solidliftisotopy}
Two (oriented)  pseudo links in the solid torus are said to be {\it isotopic} if they are related by isotopies of arcs and discs that are confined to the interior of the solid torus.
\end{definition}

In this context, the following result holds:

\begin{theorem}\label{isopST}
Two  (oriented) pseudo links in the solid torus $\mathcal{A} \times I$ are isotopic if and only if any two corresponding annular pseudo link diagrams of theirs, projected onto the annulus $\mathcal{A} \times \{0\}$, are (oriented) pseudo Reidemeister equivalent.
\end{theorem}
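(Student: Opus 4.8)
The plan is to mirror the proof of Proposition~\ref{prop:planarlift}, adapting the classical three-space argument to the constrained setting of the solid torus, so the core structure is a two-way implication established at the diagrammatic level.

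First I would address the forward direction. Suppose two pseudo links in the solid torus are isotopic, i.e.\ related by a finite sequence of arc and disc isotopies confined to the interior of $\mathcal{A}\times I$. I would put the ambient isotopy in general position with respect to the projection $\mathcal{A}\times I \to \mathcal{A}\times\{0\}$, so that at all but finitely many moments the projection of each lift is a legitimate annular pseudo link diagram (no disc projecting onto an arc, no non-transversal multiple points, etc.). The finitely many exceptional moments are exactly where the combinatorial type of the projected diagram changes, and a standard general-position analysis shows these changes are realized by: planar (surface) isotopy of the annular diagram; the classical Reidemeister moves $R_1,R_2,R_3$ coming from passing strands over/under inside the small $3$-balls; and the pseudo Reidemeister moves $PR_1,PR_2,PR_3,PR_3'$ coming from sliding an arc across, or rotating, a rigid disc supporting a precrossing. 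The only point requiring care is that all these local modifications take place inside small balls that remain within the interior of the solid torus, so no move is obstructed by the boundary — here one uses that the isotopy was confined to the interior throughout, and that the local models for the moves are the same as in the planar case since they happen in small balls disjoint from $\partial(\mathcal{A}\times I)$. Hence the two annular diagrams are pseudo Reidemeister equivalent.

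For the converse, I would show that each generating move of the annular pseudo Reidemeister equivalence lifts to an isotopy of the corresponding pseudo links in the solid torus. Given two pseudo Reidemeister equivalent annular diagrams, fix a sequence of moves relating them; each move is supported in a small disc in the annulus, and over that disc the lift can be taken to lie in a small $3$-ball inside $\mathcal{A}\times I$. Inside that $3$-ball the required modification of the lift — pushing an arc over or under another, or isotoping an arc across a rigid disc — is exactly the local isotopy used in the planar case (Definition~\ref{spatiallift} and Proposition~\ref{prop:planarlift}), and it extends by the identity outside the ball. Realizing a surface isotopy of the annular diagram lifts trivially to an arc-and-disc isotopy in $\mathcal{A}\times I$. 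Concatenating these local isotopies yields an isotopy of the two lifts confined to the interior of the solid torus, as required.

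I expect the main obstacle to be the forward direction's general-position argument, specifically verifying that \emph{no additional moves beyond the listed ones} arise from the solid-torus topology: one must check that the non-triviality of $\pi_1(\mathcal{A}\times I)$ does not introduce ``wrapping'' events not already accounted for, and that the rigid discs, being genuinely two-dimensional objects rather than arcs, do not produce degeneracies during the isotopy other than the ones encoded by $PR_1,PR_2,PR_3,PR_3'$. Both points are handled by the same local-ball reasoning as in the plane — every catastrophe of the projection is a local event occurring in a small ball in which the ambient space looks like $\mathbb{R}^3$ — but it is worth stating explicitly that the inclusion of a $3$-ball in the solid torus is what legitimizes transplanting the planar analysis, so that the classification of generic singularities of the projection is identical to the planar one. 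Once this is in place, the theorem follows, and the oriented version is obtained verbatim by carrying orientations through every local model.
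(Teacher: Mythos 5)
Your proof is correct and follows essentially the approach the paper intends: the paper states Theorem~\ref{isopST} without giving an explicit argument, treating it as the annular analogue of Proposition~\ref{prop:planarlift}, i.e.\ the standard general-position projection argument in one direction and the lifting of each local move (supported in a small ball inside $\mathcal{A}\times I$) in the other, which is exactly what you spell out. Your explicit remark that every generic catastrophe of the projection occurs in a ball where the solid torus looks like $\mathbb{R}^3$, so no new moves arise from the nontrivial topology, is the right justification and is consistent with the paper's implicit reasoning.
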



\subsection{The mixed pseudo link approach}\label{subsec:Omixed}

In this subsection we represent annular pseudo links as mixed pseudo link diagrams in the plane as well as spatial mixed pseudo links in three-space. 

It is established in \cite{LR1} that isotopy classes of (oriented) links in a knot/link complement correspond bijectively to isotopy classes of mixed links in the three-sphere $S^3$ or in three-space, through isotopies that preserve a fixed sublink which represents the knot/link complement. In particular, viewing ST as the complement of a solid torus in  $S^3$, an (oriented) link in ST is represented uniquely by a mixed link  in  $S^3$ that contains the standard unknot, ${\rm O}$, as a point-wise fixed sublink, which represents the complementary solid torus. Cf.  \cite{La1}.

Using now the spatial lift of a pseudo link, recall Definition~\ref{spatiallift}, one can define:

\begin{definition}
 An (oriented)  ${\rm O}$-\textit{mixed pseudo link} in $S^{3}$ is an (oriented) spatial  pseudo link ${\rm O} \cup K$  which contains the standard unknot ${\rm O}$ as a fixed sublink, and the sublink $K$ resulting by removing ${\rm O}$, as the \textit{moving part} of the mixed pseudo link, such that there are no precrossing discs between the fixed and the moving part.  
\end{definition}

Then, by the same reasoning as for (oriented) links in ST, and by Definitions~\ref{solidlift} and~\ref{solidliftisotopy}, we obtain that an (oriented) pseudo link $K$ in ST is represented uniquely by an (oriented) ${\rm O}$-mixed pseudo link ${\rm O} \cup K$ in $S^{3}$.  Namely:

\begin{theorem}\label{isopmixed}
Isotopy classes of (oriented) pseudo links in the solid torus are in bijective correspondence with isotopy classes of (oriented) ${\rm O}$-mixed pseudo links in $S^{3}$ via isotopies that keep ${\rm O}$ fixed.
\end{theorem}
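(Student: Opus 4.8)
The plan is to reduce the statement to the classical mixed-link correspondence of \cite{LR1} in the specific form given in \cite{La1}, and then to check that the extra structure carried by a pseudo link — the rigid discs supporting the precrossings — is transported correctly in both directions. First I would fix once and for all the standard embedding of the solid torus ${\rm ST}$ in $S^{3}$ as the closed complement of an open tubular neighbourhood $N({\rm O})$ of the standard unknot ${\rm O}$, so that ${\rm int}({\rm ST}) = S^{3}\setminus\overline{N({\rm O})}$. Given an (oriented) pseudo link $K$ in ${\rm ST}$ in the sense of Definition~\ref{solidlift} — a collection of closed curves, of embedded-disc-with-emanating-arcs type, confined to ${\rm int}({\rm ST})$ — I would simply regard $K$ as a spatial pseudo link in $S^{3}$ (Definition~\ref{spatiallift}) sitting in the complement of ${\rm O}$; since $K$ is disjoint from ${\rm O}$, in particular no precrossing disc of $K$ meets ${\rm O}$, so ${\rm O}\cup K$ is an ${\rm O}$-mixed pseudo link. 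This defines the forward map $\Phi$. Conversely, given an ${\rm O}$-mixed pseudo link ${\rm O}\cup K$, the moving part $K$ lies in $S^{3}\setminus {\rm O}$, and by the absence of precrossing discs between ${\rm O}$ and $K$ one can push $K$ off a small tubular neighbourhood of ${\rm O}$ by an ambient isotopy fixing ${\rm O}$ point-wise, landing $K$ inside a solid torus ambient-isotopic to ${\rm ST}$; this defines the inverse map $\Psi$.

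Next I would verify that $\Phi$ and $\Psi$ are well defined on isotopy classes. If $K$ and $K'$ are isotopic pseudo links in ${\rm ST}$ via arc-and-disc isotopies confined to ${\rm int}({\rm ST})$ (Definition~\ref{solidliftisotopy}), then by the isotopy extension theorem this ambient isotopy of ${\rm ST}$ extends to an ambient isotopy of $S^{3}$ that is the identity on $\overline{N({\rm O})}$, hence keeps ${\rm O}$ point-wise fixed; it carries ${\rm O}\cup K$ to ${\rm O}\cup K'$ through arc-and-disc isotopies, each small precrossing disc being carried to a small precrossing disc. The converse direction is identical, restricting an ambient isotopy of $S^{3}$ fixing ${\rm O}$ to the complementary solid torus. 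Finally, that $\Phi$ and $\Psi$ are mutually inverse is immediate from the constructions: including ${\rm ST}$ into $S^{3}$ and pushing the moving part of an ${\rm O}$-mixed pseudo link back into ${\rm ST}$ are inverse operations up to isotopy fixing ${\rm O}$, which is precisely the content of \cite{LR1,La1} applied to the underlying links, with the disc structure carried along verbatim. Alternatively, one may deduce the bijection on the diagrammatic side, combining Theorem~\ref{isopST} with the generalized Reidemeister theorem for ${\rm O}$-mixed pseudo links; either route is available.

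The main point to be careful about — and essentially the only nontrivial point — is that all the ambient isotopies invoked can be chosen to respect the rigid-disc structure of the precrossings, i.e.\ that a sufficiently small precrossing disc is always carried to a sufficiently small precrossing disc, and that the normalization ``no precrossing discs between ${\rm O}$ and $K$'' is preserved throughout. Since ${\rm O}$ carries no precrossings and is kept point-wise fixed, while the precrossing discs of $K$ are small, local, and remain disjoint from ${\rm O}$ at every stage, this is automatic: the classical argument of \cite{LR1,La1} goes through unchanged, with ``link'' replaced by ``pseudo link'' and ``isotopy'' replaced by ``arc-and-disc isotopy'', yielding the asserted bijection.
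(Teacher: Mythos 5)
Your proposal is correct and follows essentially the same route as the paper, which obtains the theorem "by the same reasoning as for (oriented) links in ST" — i.e.\ by transporting the classical mixed-link correspondence of \cite{LR1,La1} to the pseudo setting using Definitions~\ref{solidlift} and~\ref{solidliftisotopy}. Your write-up merely makes explicit (via the isotopy extension theorem and the preservation of the rigid precrossing discs disjoint from ${\rm O}$) the details the paper leaves implicit.
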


\noindent For an example see Figure~\ref{pmixedtor}. The reader may compare with \cite{D}, where a pseudo link in ST is defined as an ${\rm O}$-mixed pseudo link.

\begin{figure}[H]
\begin{center}
\includegraphics[width=4in]{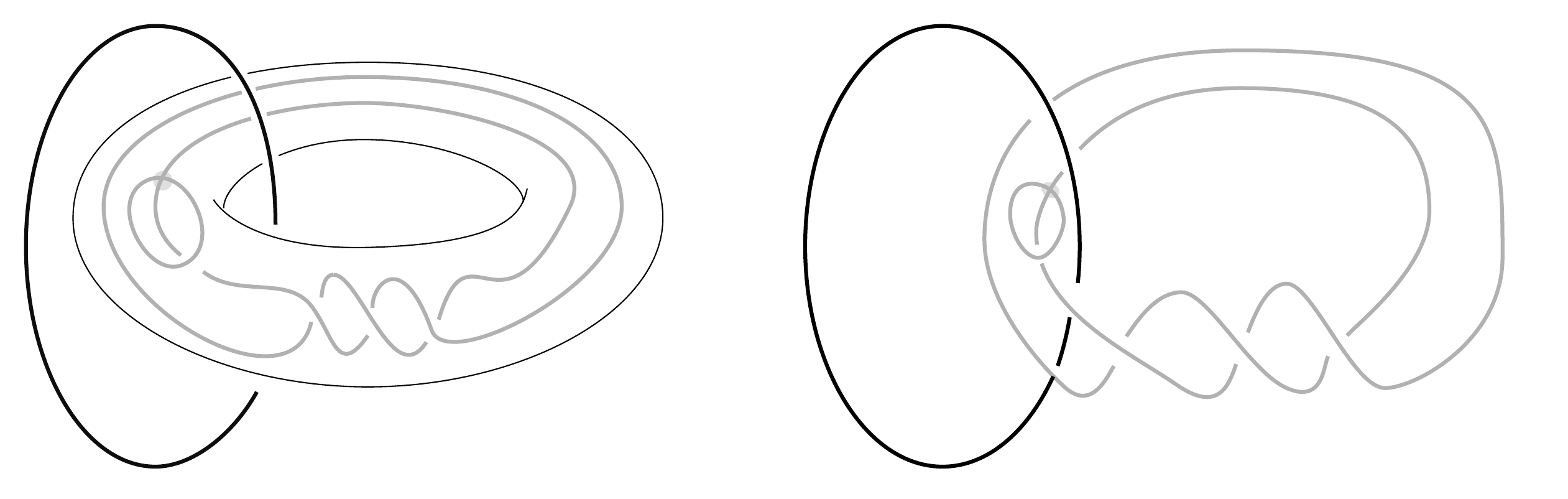}
\end{center}
\caption{A pseudo link in the solid torus and its corresponding ${\rm O}$-mixed pseudo link.}
\label{pmixedtor}
\end{figure}

Taking now a diagrammatic approach we define:

\begin{definition}
An (oriented)  ${\rm O}$-\textit{mixed pseudo link diagram} is an (oriented) regular projection ${\rm O}\cup D$ of an (oriented)  ${\rm O}$-mixed pseudo link ${\rm O}\cup K$ on the plane of ${\rm O}$, such that some double points are precrossings, as projections of the precrossings of ${\rm O}\cup K$, there are no precrossings between the fixed and the moving part, and the rest of the crossings, which are either crossings of arcs of the moving part or \textit{mixed crossings} between arcs of the moving and the fixed part, are endowed with over/under information.  
\end{definition}

Combining  the equivalence of planar pseudo link diagrams (recall Section~\ref{planarpk}) and the theory of mixed links (recall \cite{LR1,La1}) we obtain the discrete diagrammatic equivalence of ${\rm O}$-mixed pseudo links:

\begin{theorem} \label{Omixedreid}
[The ${\rm O}$-mixed pseudo Reidemeister equivalence]\label{reidplink}
Two (oriented) ${\rm O}$-mixed pseudo links in $S^{3}$ are isotopic if and only if any two (oriented)  ${\rm O}$-mixed pseudo link diagrams of theirs  differ by planar isotopies, a finite sequence of the classical and the pseudo Reidemeister moves, as exemplified in Figure~\ref{reid}, for the moving parts of the mixed pseudo links, and moves that involve the fixed and the moving parts, called {\rm mixed Reidemeister moves}, comprising the moves $MR_2, MR_3, MPR_3$, as exemplified in Figure~\ref{mpr}.
\end{theorem}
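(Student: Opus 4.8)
The plan is to obtain the ${\rm O}$-mixed pseudo Reidemeister equivalence by combining three ingredients already available: (i) Theorem~\ref{isopmixed}, which identifies isotopy classes of ${\rm O}$-mixed pseudo links with isotopy classes of pseudo links in ST; (ii) the classical mixed-link Reidemeister theorem of \cite{LR1,La1}, which translates isotopy in a link complement (here ST, viewed as the complement of ${\rm O}$ in $S^3$) into planar isotopy plus the classical Reidemeister moves on the moving part plus the mixed moves $MR_2, MR_3$; and (iii) the pseudo Reidemeister equivalence of planar pseudo link diagrams from Section~\ref{planarpk}, which accounts for the precrossings. First I would fix notation: given an ${\rm O}$-mixed pseudo link ${\rm O}\cup K$ in $S^3$, a diagram ${\rm O}\cup D$ is a regular projection onto the plane of ${\rm O}$ as in the preceding definition, and by hypothesis no precrossing disc involves ${\rm O}$, so every mixed crossing is an honest over/under crossing.

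The forward direction (isotopic $\Rightarrow$ diagrams differ by the listed moves) is the substantive one. I would argue as follows. An isotopy of ${\rm O}\cup K$ keeping ${\rm O}$ fixed is, by Theorem~\ref{isopmixed} and Definitions~\ref{solidlift}, \ref{solidliftisotopy}, the same data as an arc-and-disc isotopy of the corresponding pseudo link in ST. By Theorem~\ref{isopST}, such an isotopy is realized at the diagram level, on the annulus $\mathcal{A}\times\{0\}$, by a finite sequence of surface isotopies and classical/pseudo Reidemeister moves. The remaining task is purely local-to-global bookkeeping: I would decompose a generic such isotopy into elementary pieces and check that each piece, once the annular picture is re-expressed as an ${\rm O}$-mixed diagram in the plane (embedding the punctured disc $\mathcal{A}$ into the plane of ${\rm O}$ with ${\rm O}$ encircling the puncture), corresponds to exactly one of: a planar isotopy; a classical Reidemeister move $R_1,R_2,R_3$ or a pseudo Reidemeister move $PR_1,PR_2,PR_3,PR_3^\prime$ supported in a disc missing ${\rm O}$ (the moving-part moves); or a move in which a strand of $K$ passes across a strand of ${\rm O}$, which is precisely $MR_2$, $MR_3$, or $MPR_3$ (the latter occurring when the strand dragged across ${\rm O}$ carries a precrossing — this is where the pseudo enhancement $MPR_3$ of Figure~\ref{mpr} enters, and it is the genuinely new move beyond \cite{LR1}). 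The point that no $MR_1$ (a strand of $K$ sliding over ${\rm O}$ by a kink) and no mixed move \emph{creating} a precrossing between ${\rm O}$ and $K$ is needed follows from the same two observations used in \cite{La1}: ${\rm O}$ is a round unknot around the puncture, and the definition forbids precrossing discs between the fixed and moving parts, so such crossings are never produced. The converse direction is immediate: each listed move is a local isotopy of ${\rm O}\cup K$ in $S^3$ (the classical and mixed moves by \cite{LR1,La1}, the pseudo and $MPR_3$ moves because sliding a rigid precrossing disc across a strand, or past ${\rm O}$, is an arc-and-disc isotopy keeping ${\rm O}$ pointwise fixed), hence isotopic ${\rm O}$-mixed pseudo links result.

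The main obstacle I anticipate is not conceptual but the careful enumeration that the moves $MR_2, MR_3, MPR_3$ together with the planar and pseudo Reidemeister moves form a \emph{complete} list — i.e. that no further mixed-pseudo interaction move is required. Concretely, one must verify that when the underlying (non-pseudo) mixed move from \cite{LR1} is $MR_3$ and one or more of the three strands involved carries a precrossing, the resulting move is already generated by $MPR_3$ together with planar isotopies and the $PR_i$ moves applied to the moving part; and symmetrically that a precrossing disc passing over the fixed component ${\rm O}$ in a trivial ($R_1$-type) fashion is realizable without a dedicated move. I would handle this exactly as in the classical mixed-link case: reduce every mixed move to a standard local model, then observe that a precrossing, being supported on a rigid disc, behaves in these local models just like a small crossingless "box" that can be slid along a strand by planar isotopy, so its presence never forces a new move type beyond $MPR_3$. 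Modulo this routine-but-tedious case analysis — which mirrors the proof of Theorem~\ref{Omixedreid}'s classical analogue and the planar pseudo Reidemeister theorem of \cite{HJMR} — the statement follows.
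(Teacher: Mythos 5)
Your proposal is correct and takes essentially the same route as the paper, which derives Theorem~\ref{Omixedreid} by combining the planar pseudo Reidemeister equivalence of \cite{HJMR} with the mixed-link Reidemeister theory of \cite{LR1,La1}, the only new ingredient being that the precrossing/fixed-part interaction is absorbed into the single extra move $MPR_3$ (no precrossings with ${\rm O}$ being allowed), exactly the completeness check you single out. Your detour through Theorem~\ref{isopST} and the annular picture is redundant scaffolding (and the description of ${\rm O}$ as a coplanar circle encircling the puncture is geometrically imprecise, since ${\rm O}$ is the core of the complementary solid torus threading the hole, so that essential windings appear as mixed crossings), but this does not affect the substance of the argument.
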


\begin{figure}[H]
\begin{center}
\includegraphics[width=5.1in]{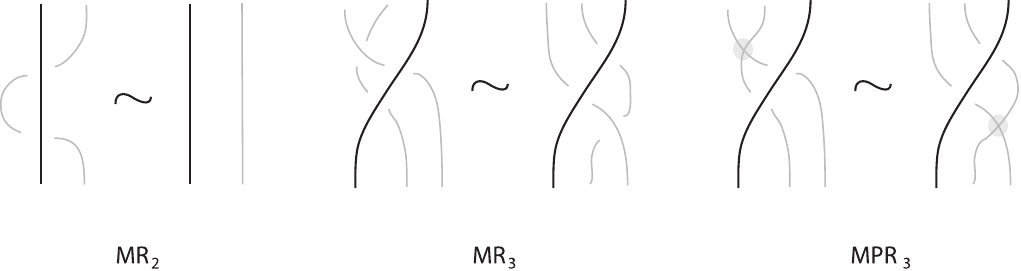}
\end{center}
\caption{The mixed Reidemeister moves for ${\rm O}$-mixed pseudo link diagrams.}
\label{mpr}
\end{figure}

In the last two subsections we lifted annular pseudo link diagrams in the thickened annulus and  related pseudo links in the solid torus to ${\rm O}$-mixed pseudo links and ${\rm O}$-mixed pseudo link diagrams. These are recapitulated in Figure~\ref{ppsttt}, where $\mathcal{A}$ is represented as an once punctured disc.

\begin{figure}[H]
\begin{center}
\includegraphics[width=4.7in]{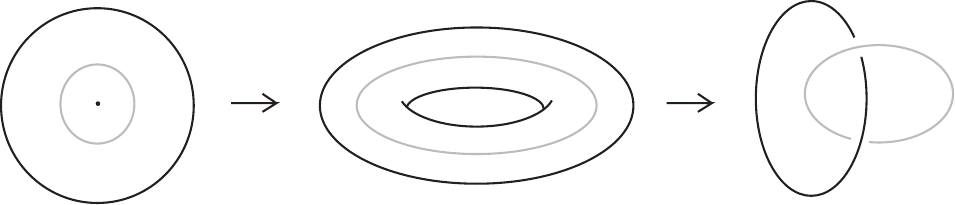}
\end{center}
\caption{The transition of a pseudo link in $\mathcal{A}$ to a pseudo link in ST, to an ${\rm O}$-mixed pseudo link.}
\label{ppsttt}
\end{figure}

Further, Theorems~\ref{isopST},~\ref{isopmixed} and~\ref{Omixedreid} culminate in the following diagrammatic equivalence:

\begin{theorem}\label{annulartoOmixed}
Two  (oriented) annular pseudo link diagrams are (oriented) pseudo Reidemeister equivalent if and only if  any two corresponding ${\rm O}$-mixed pseudo link diagrams of theirs are ${\rm O}$-mixed pseudo Reidemeister equivalent.
\end{theorem}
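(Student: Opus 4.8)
The plan is to obtain Theorem~\ref{annulartoOmixed} by composing the three biconditional results already established, namely Theorem~\ref{isopST}, Theorem~\ref{isopmixed} and Theorem~\ref{Omixedreid}, arranged as a chain of equivalences that travels from the annular diagram, through its lift in the solid torus, through the associated ${\rm O}$-mixed pseudo link in $S^{3}$, down to an ${\rm O}$-mixed pseudo link diagram.

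First I would pin down precisely what ``corresponding'' means, so that the two occurrences of that word in the statement refer to one and the same correspondence. Given an (oriented) annular pseudo link diagram $D$, Definition~\ref{solidlift} produces a pseudo link $L_D$ in ${\rm ST}=\mathcal{A}\times I$, well defined up to the isotopy of Definition~\ref{solidliftisotopy}; by Theorem~\ref{isopmixed} the isotopy class of $L_D$ determines, and is determined by, the isotopy class (keeping ${\rm O}$ point-wise fixed) of an ${\rm O}$-mixed pseudo link ${\rm O}\cup K_D$ in $S^{3}$; and any regular projection of ${\rm O}\cup K_D$ onto the plane of ${\rm O}$, in which no disc supporting a precrossing degenerates to an arc and no precrossing disc lies between ${\rm O}$ and $K_D$, is by definition a \emph{corresponding} ${\rm O}$-mixed pseudo link diagram ${\rm O}\cup\widehat D$ of $D$. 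Since each of the passages $D\mapsto L_D$, $L_D\mapsto {\rm O}\cup K_D$, ${\rm O}\cup K_D\mapsto {\rm O}\cup\widehat D$ is onto (every pseudo link in ST is some lift, by Theorem~\ref{isopmixed} every ${\rm O}$-mixed pseudo link arises from ST, and every such mixed pseudo link admits admissible projections), every ${\rm O}$-mixed pseudo link diagram is a corresponding diagram of some annular diagram, so the statement is not vacuous.

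Then I would prove both implications at once, exploiting that the three cited theorems are each stated as ``if and only if.'' Let $D^{1},D^{2}$ be annular pseudo link diagrams with corresponding ${\rm O}$-mixed diagrams ${\rm O}\cup\widehat D^{1}$ and ${\rm O}\cup\widehat D^{2}$. Then $D^{1}$ and $D^{2}$ are (oriented) pseudo Reidemeister equivalent $\iff$ (Theorem~\ref{isopST}) their lifts $L_{D^{1}}$ and $L_{D^{2}}$ are isotopic in ST $\iff$ (Theorem~\ref{isopmixed}) the ${\rm O}$-mixed pseudo links ${\rm O}\cup K_{D^{1}}$ and ${\rm O}\cup K_{D^{2}}$ are isotopic in $S^{3}$ via isotopies fixing ${\rm O}$ $\iff$ (Theorem~\ref{Omixedreid}) the diagrams ${\rm O}\cup\widehat D^{1}$ and ${\rm O}\cup\widehat D^{2}$ are ${\rm O}$-mixed pseudo Reidemeister equivalent. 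Reading the chain from left to right yields the ``only if'' direction, and from right to left the ``if'' direction; the oriented case follows verbatim, since each of the three theorems is invoked in its oriented form and the lifts carry orientations along.

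The only point genuinely requiring care, and the step I expect to be the main obstacle, is the internal coherence of ``corresponding'': one must check that the composite assignment $D\mapsto {\rm O}\cup\widehat D$ descends to the appropriate equivalence classes, so that the first and second uses of the word in the statement are the same correspondence. Concretely it must be verified that (i) the lift of Definition~\ref{solidlift} never creates a precrossing disc between what becomes ${\rm O}$ and the moving part, which is automatic since every precrossing of $L_D$ descends from a precrossing of $D$ and $D$ lies entirely in the moving part, so ${\rm O}\cup K_D$ really is an ${\rm O}$-mixed pseudo link; and (ii) replacing $D$ by a pseudo Reidemeister equivalent annular diagram, or ${\rm O}\cup K_D$ by an isotopic copy, or choosing another admissible projection, alters ${\rm O}\cup\widehat D$ only within its ${\rm O}$-mixed pseudo Reidemeister class, which is exactly what the chain of equivalences above already certifies. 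Hence no argument beyond the composition of the three theorems is needed, and Theorem~\ref{annulartoOmixed} follows.
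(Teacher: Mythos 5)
Your proposal is correct and follows essentially the same route as the paper, which obtains Theorem~\ref{annulartoOmixed} precisely as the composition of the chain of equivalences given by Theorems~\ref{isopST}, \ref{isopmixed} and~\ref{Omixedreid}. Your additional care in fixing the meaning of ``corresponding'' and checking that the assignment $D\mapsto {\rm O}\cup\widehat D$ is well defined on equivalence classes is a welcome elaboration of what the paper leaves implicit, but introduces no new argument.
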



\subsection{Annular inclusions} \label{annularinclusions}

Annular pseudo link diagrams with no precrossings can be viewed as link diagrams in the annulus, and Reidemeister equivalence in the annulus is compatible with annular pseudo link equivalence. Thus, there is an injection of annular links into annular pseudo links. We note that annular link diagrams modulo the Reidemeister equivalence correspond bijectively to isotopy classes of links in the solid torus  ST, cf. for example \cite{Tu,La2}. 

The inclusion of a disc  in the annulus induces an injection of the theory of (planar) pseudo links into the theory of annular pseudo links. In terms of liftings, the inclusion of a three-ball  in the thickened annulus induces an injection of the theory of spatial pseudo links into the theory of pseudo links in the solid torus, from the above and by Theorem~\ref{isopST}. 

On the other hand, the inclusion of the annulus (resp. thickened annulus) in a disc (resp. three-ball) induces a surjection of the theory of annular  pseudo links (resp.  pseudo links in ST)  onto planar (resp. spatial) pseudo links, where the essential components are mapped to usual components. View Figure~\ref{planarannular}.

\begin{figure}[H] 
\begin{center} 
\includegraphics[width=6in]{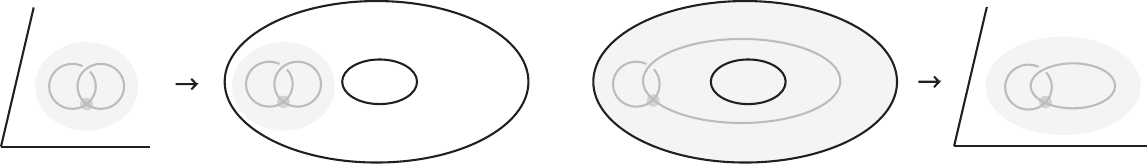} 
\end{center} 
\caption{Inclusion relations: a disc in the annulus and  the annulus in a disc.} 
\label{planarannular} 
\end{figure}

Finally, in terms of ${\rm O}$-mixed pseudo links, the inclusion of the annulus in a disc  corresponds to omitting  the fixed part ${\rm O}$, so we are left with a planar pseudo link.


\subsection{The weighted resolution set for annular pseudo links}

In this subsection we extend to annular pseudo links the weighted resolution set defined in Definition~\ref{wrs} for planar pseudo links \cite{HJMR},  and we prove that it is an invariant of theirs. Indeed:

\begin{definition}\label{resannularmixed}\rm 
 A {\it resolution} of an annular pseudo link diagram \( K \) is a specific assignment of crossing types (positive or negative) for every precrossing in \( K \).  The result is a link diagram in the annulus, which lifts to a link in the solid torus, recall Subsection~\ref{annularinclusions}. 
  Similarly, an {\it ${\rm O}$-resolution} of the ${\rm O}$-mixed pseudo link diagram \({\rm O} \cup K \) is a specific assignment of crossing types (positive or negative) for every precrossing in \({\rm O} \cup K \). In this case the resulting link is an ${\rm O}$-mixed link in $S^3$, representing uniquely the lift of the resolution of \( K \) in ST. 
 \end{definition}

 \begin{definition}\label{wrsannularmixed}\rm 
The {\it annular weighted resolution set} or {\it annular WeRe set}, of an annular  pseudo link diagram \( K \) is a collection of ordered pairs \((K_i, p_{K_i})\), where \( K_i \) represents a resolution of \( K \), and \( p_{K_i} \) denotes the probability of obtaining from \( K \) the equivalence class of \( K_i \) through a random assignment of crossing types, with equal likelihood for positive and negative crossings. 

Similarly, the {\it ${\rm O}$-weighted resolution set} or {\it ${\rm O}$-WeRe set}, of an ${\rm O}$-mixed pseudo link diagram  \({\rm O} \cup K \)  is a collection of ordered pairs \(({\rm O} \cup K_i, p_{K_i})\), where \( {\rm O} \cup K_i \) represents a resolution of \({\rm O} \cup K \), and \( p_{K_i} \) denotes the probability of obtaining from \( {\rm O} \cup K \) the equivalence class of \( {\rm O} \cup K_i \) through a random assignment of crossing types, with equal likelihood for positive and negative crossings.
\end{definition}

Further, we have the following result:

\begin{theorem}\label{th:annularWeRe}
The annular WeRe set is an invariant of annular pseudo links. Similarly, the ${\rm O}$-WeRe set is an invariant of ${\rm O}$-mixed pseudo links. Subsequently, any  invariant of links in the solid torus resp. of ${\rm O}$-mixed  links, applied on the elements of an annular resp. an ${\rm O}$-WeRe set, induces also an invariant set of the  annular resp. the ${\rm O}$-mixed pseudo link.
\end{theorem}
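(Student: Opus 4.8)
The plan is to mirror, in the annular and mixed settings, the argument that establishes Theorem~\ref{wereinv} for planar pseudo links. The core observation is that a weighted resolution set is determined, as a multiset of weighted equivalence classes, by (i) the combinatorial set of precrossings of a diagram, (ii) the uniform $1/2$–$1/2$ resolution rule at each precrossing, and (iii) the ambient equivalence relation used to form the classes $K_i$ — here, Reidemeister equivalence of links in the solid torus (resp.\ of ${\rm O}$-mixed links in $S^3$). So it suffices to check that each generating move of the annular pseudo Reidemeister equivalence either does not touch precrossings, or transforms the precrossing set in a way that induces a probability-preserving bijection between the two resolution multisets, with matched resolutions being equivalent links in ST (resp.\ ${\rm O}$-mixed links).

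First I would fix an annular pseudo link diagram $K$ with precrossings $c_1,\dots,c_n$, and for each resolution vector $\varepsilon\in\{+,-\}^n$ let $K_\varepsilon$ be the resulting annular link diagram, which by Subsection~\ref{annularinclusions} lifts to a well-defined link in ST; the annular WeRe set is then $\{(\,[K_\varepsilon],\,2^{-n}\,) : \varepsilon\}$ collected into classes. Next I would go move-by-move through the Reidemeister equivalence of Definition~\ref{anpl}. The classical moves $R_1,R_2,R_3$ and the surface isotopies involve no precrossings: they induce an identity bijection on $\{+,-\}^n$, and for each $\varepsilon$ the diagrams $K_\varepsilon$ and $K'_\varepsilon$ differ by the very same classical move or isotopy in the annulus, hence are equivalent links in ST; so the WeRe set is unchanged. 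For $PR_2$ and $PR_3,PR_3'$, the number of precrossings is preserved (a $PR_2$ creates/destroys two precrossings, $PR_3,PR_3'$ permute strands through a precrossing region), and one checks directly that every resolution on one side corresponds to a resolution on the other via a classical $R_2$ or $R_3$ move (this is exactly the local verification already implicit in \cite{HJMR}, now performed inside the annulus rather than the plane), giving a weight-preserving bijection of resolution multisets. The only genuinely asymmetric case is $PR_1$, which adds a single precrossing: here $n\mapsto n+1$, the two resolutions of the new precrossing are $K$ with a positive, resp.\ negative, kink, both of which reduce to $K_\varepsilon$ by $R_1$; thus the $2^{-(n+1)}$-weighted pair of classes collapses onto the old $2^{-n}$-weighted class, leaving the WeRe set invariant. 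Assembling these cases shows the annular WeRe set depends only on the annular pseudo link type.

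For the ${\rm O}$-mixed statement I would run the identical bookkeeping, now over the generating moves of the ${\rm O}$-mixed pseudo Reidemeister equivalence of Theorem~\ref{Omixedreid}: the moving-part classical and pseudo moves are handled exactly as above, and the mixed moves $MR_2,MR_3$ involve no precrossings (hence act trivially on resolutions, with matched ${\rm O}$-resolutions related by the same mixed move), while $MPR_3$ preserves the precrossing count and induces the obvious bijection with matched ${\rm O}$-resolutions related by an $MR_3$. Since Definition~\ref{resannularmixed} identifies an ${\rm O}$-resolution of ${\rm O}\cup K$ with the ${\rm O}$-mixed link representing the lift of the corresponding resolution of $K$ in ST, invariance of the ${\rm O}$-WeRe set is in fact equivalent to invariance of the annular WeRe set under the dictionary of Theorems~\ref{isopST},~\ref{isopmixed},~\ref{annulartoOmixed}; one may either prove it directly as sketched or deduce it from the annular case via that dictionary. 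Finally, the induced-invariant statement is formal: if $v$ is any invariant of links in ST (resp.\ of ${\rm O}$-mixed links), then applying $v$ entrywise to $\{(K_i,p_{K_i})\}$ yields $\{(v(K_i),p_{K_i})\}$, which is a function of the WeRe set alone and hence of the annular (resp.\ ${\rm O}$-mixed) pseudo link type.

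I expect the main obstacle to be purely the careful local analysis of $PR_2,PR_3,PR_3'$ (and $MPR_3$): one must exhibit, for every one of the finitely many resolution patterns at the strands participating in the move, an explicit matching classical (resp.\ mixed) Reidemeister move certifying equivalence of $K_\varepsilon$ and $K'_{\sigma(\varepsilon)}$, and confirm that the matching $\sigma$ is a bijection preserving the uniform weights. This is routine but is the only place where something could in principle go wrong; everything else ($R_i$, surface isotopy, $PR_1$, $MR_2$, $MR_3$, and the induced-invariant corollary) is immediate.
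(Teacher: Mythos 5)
Your proposal takes essentially the same route as the paper: a move-by-move check that every annular (resp.\ ${\rm O}$-mixed) pseudo Reidemeister move acts on resolutions through the corresponding classical (resp.\ mixed) moves, with the local pseudo-move verifications deferred to the planar result of \cite{HJMR}, the moves $MR_2,MR_3$ handled as precrossing-free, $MPR_3$ resolved to an $MR_3$, and the induced-invariant claim treated as formal --- your explicit $PR_1$ probability-collapse bookkeeping is just a more detailed version of what the paper cites from \cite{HJMR}. The only slip is the parenthetical description of $PR_2$ as creating/destroying two precrossings (a bigon-annihilation move involving precrossings would not preserve the WeRe set; $PR_2$ leaves the precrossing count unchanged), but since you defer the local checks to \cite{HJMR} this does not affect the argument.
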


\begin{proof}
It follows from Theorem~\ref{wereinv} of \cite{HJMR} and from Definition~\ref{anpl} that the annular WeRe set is, indeed, invariant under the standard Reidemeister moves $R_1, R_2, R_3$ and the pseudo Reidemeister moves $PR_1, PR_2, PR_3$ and $PR_3^{\prime}$, since these moves are compatible with the corresponding moves with no precrossings.  
Further,  an ${\rm O}$-mixed pseudo link diagram \({\rm O} \cup K \) is by definition a planar  pseudo link diagram, thus, by virtue of Theorem~\ref{wereinv}  its ${\rm O}$-weighted resolution set is an invariant of \({\rm O} \cup K \). Indeed, it suffices to show that the ${\rm O}$-WeRe set is invariant under the mixed Reidemeister moves. The $MR_2$ and $MR_3$ moves preserve the resolution set, since they do not involve precrossings. The $MPR_3$ moves are similar to the $MR_3$ moves, since regardless of which resolution is considered for the precrossing, the result is an $MR_3$ move that does not change the knot type. 

 Moreover, any invariant of links in the solid torus (e.g. \cite{HK,Tu,La2}) applied to the elements of an annular resp. ${\rm O}$-WeRe set will respect the same set of local equivalence moves, hence will preserve the WeRe set, so it induces  an invariant set of the  annular resp. ${\rm O}$-mixed pseudo link. 
\end{proof}

\begin{example}
In this example we illustrate the resolution sets of an annular  pseudo trefoil with two precrossings, where  $\mathcal{A}$ is viewed as an once punctured disc. The resulting links are links in the solid torus ST, namely: an essential trefoil with probability 1/4, a  twice  counterclockwise descending essential unknot  with probability 1/4 and a twice clockwise descending essential unknot  with probability 1/2. In  $\mathcal{A}$, an {\it essential knot} is a knot diagram that cannot be contracted to a point within $\mathcal{A}$.

\begin{figure}[H]
    \centering
     \includegraphics[width=0.9\linewidth]{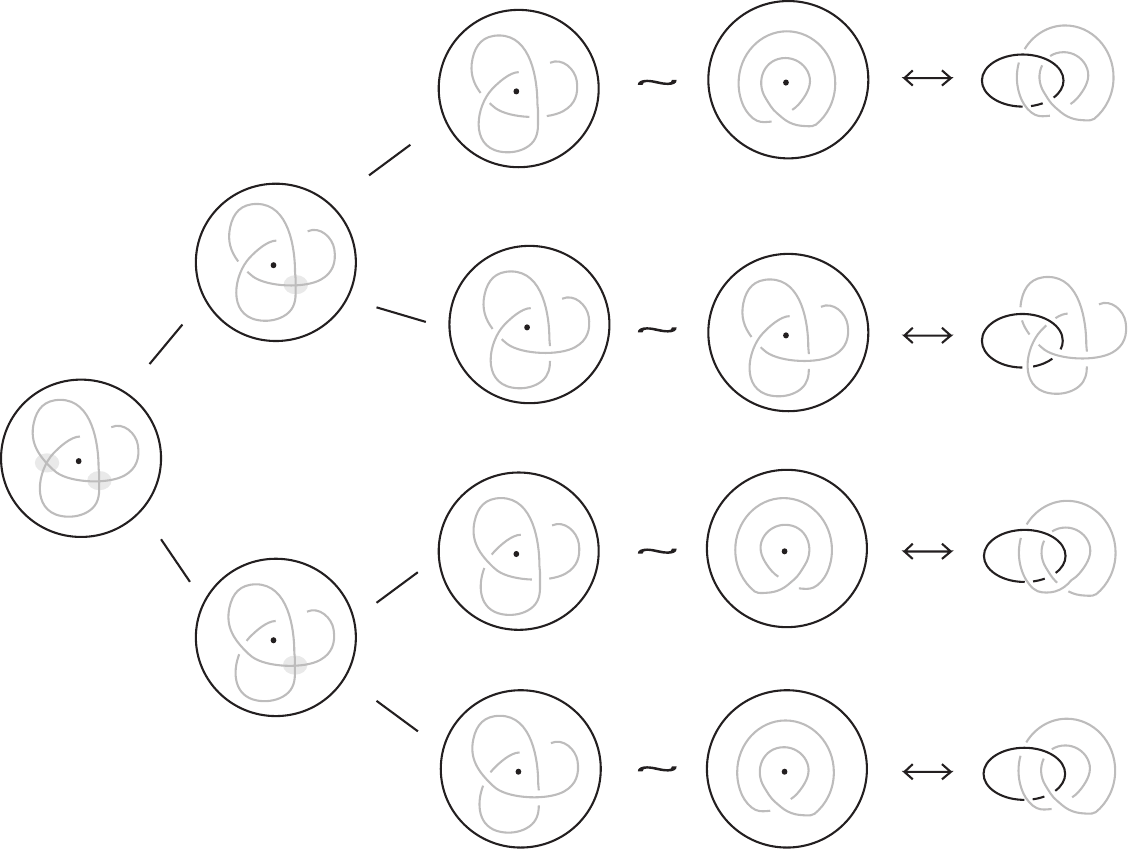}
    \caption{The resolution sets of an annular pseudo trefoil.}
    \label{fig:weretrst}
\end{figure}
\end{example}



\section{Toroidal pseudo knot theory}\label{sec:toroidal}
 
In this section, we introduce and develop the theory of pseudo knots and links in the torus, that is, pseudo link diagrams in the torus subjected to Reidemeister-like moves. We study  toroidal pseudo links in different geometric contexts, namely through their lifts to closed curves with precrossings in the thickened torus and through their representations as planar ${\rm H}$-mixed pseudo links, that is, pseudo links with one fixed sublink, the Hopf link. We also define the invariant WeRe set for a toroidal pseudo link, which is a set of associated links in the thickened torus, and the ${\rm H}$-WeRe set for an ${\rm H}$-mixed pseudo link, which is a set of associated ${\rm H}$-mixed  links. Finally, we investigate the relationships between toroidal pseudo links with links in the thickened torus as well as with annular and planar pseudo links, through illuminating inclusion relations, highlighting the interplay between these various contexts.

The surface of the torus, which is the space $\mathcal{T} = S^1 \times S^1$, can be viewed either meridian-wise, as the circular gluing of two cylinders along their boundary circles, or longitude-wise, as the gluing of two annuli along their outer and inner  boundary circles. Equivalently, as the identification space of a cylinder, by identifying the two circular boundary components, or even, as the identification space of an annulus, by identifying the inner and the outer circular boundary components. 

\begin{definition}
A {\it toroidal pseudo knot/link diagram} consists in a regular knot or link diagram in the surface of a torus, where some crossing information may be missing, in the sense that it is not known which strand passes over the other. These undetermined crossings are the {\it precrossings} or {\it pseudo crossings} and are depicted as transversal intersections of arcs of the diagram enclosed in a light gray circle. For an example view Figure~\ref{pkttorus}. Assigning an orientation to each component of an toroidal pseudo link diagram results in an {\it oriented } toroidal pseudo link diagram. 
\end{definition}

\begin{figure}[H]
\begin{center}
\includegraphics[width=2.3in]{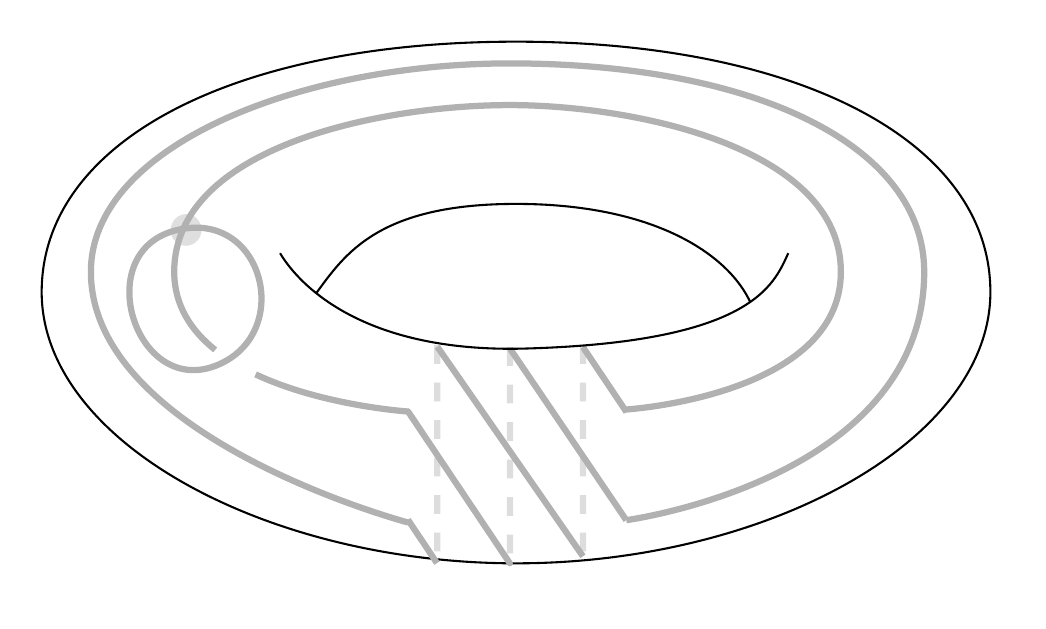}
\end{center}
\caption{A toroidal pseudo link.}
\label{pkttorus}
\end{figure}

Due to the topology of the toroidal surface, note that toroidal pseudo link diagrams have two types of essential loops that are not homologically trivial: the longitudinal ones, as in the case of annular pseudo link diagrams, but also the meridional ones, and, of course, combinations of these, such as the torus knots and links. For example, the pseudo link in Figure~\ref{pkttorus} has one essential component which winds twice in the longitudinal direction and thrice in the meridional direction.  We proceed with defining an equivalence relation in the set of toroidal pseudo link diagrams.
 
\begin{definition}\label{torpl}
A {\it toroidal pseudo link} is defined as an equivalence class of toroidal pseudo link diagrams under surface isotopy and all versions of the classical Reidemeister moves and the extended pseudo Reidemeister moves, as exemplified in Figure~\ref{reid}, all together comprising the {\it Reidemeister equivalence} for toroidal pseudo links. As for classical and annular pseudo links, for an oriented Reidemeister equivalence we require also orientations to be preserved via the oriented versions of the moves.
\end{definition}

\begin{remark}\label{singt}
As in the classical and annular case, the theory of toroidal pseudo links can be related to the theory of {\it toroidal singular links}. Indeed, there is a bijection from the set of toroidal singular link diagrams to the set of toroidal pseudo link diagrams. This bijection maps singular crossings to precrossings. 
 The mapping carries through to all of the pseudo Reidemeister moves, with the exception of the pseudo Reidemeister I move ($PR_1$). Hence, we obtain an onto map from the set of toroidal singular links to the set of toroidal pseudo links, since the images of two equivalent toroidal singular link diagrams are also equivalent toroidal pseudo link diagrams with exactly the same sequence of corresponding pseudo Reidemeister moves, and every toroidal pseudo link type is clearly covered.
\end{remark}


\subsection{The lift of toroidal pseudo links in the thickened torus}\label{sec:lift-toroidal}

In this subsection we define the lift of toroidal pseudo links in the thickened torus in analogy to the lift of annular pseudo links in the solid torus (recall Definition~\ref{solidlift}). We consider the thickening $\mathcal{T} \times I$, where  $I$ denotes the unit interval, $I=[0,1]$ (see Figure~\ref{pthtor}). 

Recall that a thickened torus can be viewed as a solid torus having another solid torus been drilled out from its interior. Equivalently, as the identification space of a thickened cylinder, by identifying the two annular boundary components, as illustrated in Figure~\ref{pthtor}, or even, as the gluing of two thickened annuli along their outer and their inner annular boundaries. Finally, a thickened torus can be defined as the complement in the three-sphere $S^3$ of the Hopf link. 

\begin{definition} \label{ttoruslift}
The {\it lift} of a toroidal pseudo link diagram in the thickened torus $\mathcal{T} \times I$ is defined so that: each classical crossing is embedded in a sufficiently small 3-ball that lies entirely within the thickened torus, precrossings are supported by sufficiently small rigid discs, which are embedded in the thickened torus, and the simple arcs connecting the crossings can be replaced by isotopic ones in the thickened torus. The lifting of the precrossings within these rigid discs maintains the pseudo link's essential structure while preserving the `ambiguity' of its precrossings. The resulting lift is called a {\it  pseudo link in the thickened torus} and it is a collection of closed curve(s) constrained to the interior of the thickened torus, consisting in embedded discs from which emanate embedded arcs.
\end{definition}

In Figure~\ref{pthtor} we illustrate the lift of a pseudo link in $\mathcal{T} \times I$, viewed  as the identification space of a thickened cylinder. In particular, observe that this pseudo link contains two components: a null-homologous loop linked to an essential locally knotted loop winding twice along the meridian and once along the longitude.

\begin{figure}[H]
\begin{center}
\includegraphics[width=2.9in]{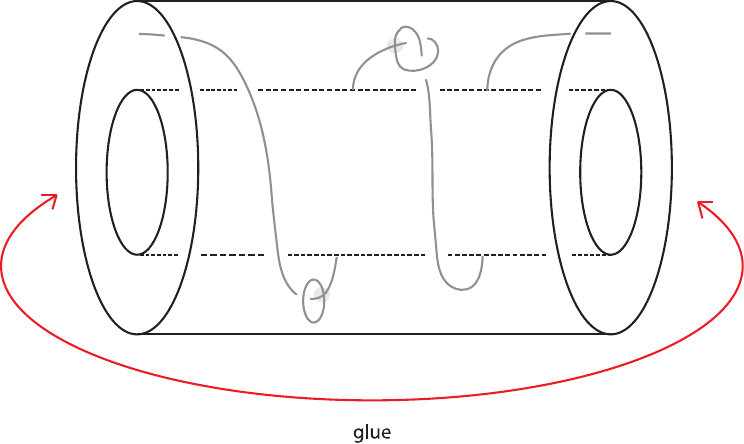}
\end{center}
\caption{The lift of a toroidal pseudo link to a pseudo link in $\mathcal{T} \times I$.}
\label{pthtor}
\end{figure}

\begin{definition} \label{ttliftisotopy}
Two (oriented) pseudo links in the thickened torus are said to be {\it isotopic} if they are related by isotopies of arcs and discs within the interior of the thickened torus.
\end{definition}

In this context, and in analogy to the spatial and annular pseudo links, the following  holds:

\begin{theorem}\label{isoptt}
Two  (oriented) pseudo links in the thickened torus $\mathcal{T} \times I$ are isotopic if and only if any two corresponding toroidal pseudo link diagrams of theirs, projected onto the outer toroidal boundary $S^1 \times S^1$, are (oriented) pseudo Reidemeister equivalent.
\end{theorem}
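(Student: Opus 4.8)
The plan is to mirror the argument already used for planar pseudo links (Proposition~\ref{prop:planarlift}) and for annular pseudo links in the solid torus (Theorem~\ref{isopST}), adapting it to the thickened torus. The strategy splits naturally into the two implications.

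\emph{From isotopy to pseudo Reidemeister equivalence.} First I would set up a generic projection $\pi\colon \mathcal{T}\times I \to \mathcal{T}\times\{0\} = S^1\times S^1$ onto the outer toroidal boundary, and observe that for a pseudo link in $\mathcal{T}\times I$ in generic position (no rigid disc projecting onto an arc, only double points in the image, transversality of arcs), the projection is a toroidal pseudo link diagram, with the discs projecting to precrossings and the small $3$-balls projecting to classical crossings. Then I would take an isotopy of arcs and discs confined to the interior of $\mathcal{T}\times I$ relating two such pseudo links, put it in general position with respect to $\pi$, and decompose it into finitely many elementary moves: isotopies that do not change the combinatorial type of the diagram (these give surface isotopy of the toroidal diagram), and finitely many instants at which the diagram changes through a codimension-one degeneracy. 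The catalogue of such degeneracies — a strand becoming tangent to another, a triple point appearing, a strand passing across a crossing or across a rigid disc, a small kink being created or destroyed — yields exactly the classical Reidemeister moves $R_1,R_2,R_3$ together with the pseudo moves $PR_1,PR_2,PR_3,PR_3'$ of Figure~\ref{reid}. The rigidity of the discs is what forces the pseudo-crossing degeneracies to take precisely these forms; here one reuses verbatim the local analysis from the three-space case, since all the degeneracies are local and happen inside a ball in which $\mathcal{T}\times I$ looks like $\mathbb{R}^3$.

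\emph{From pseudo Reidemeister equivalence to isotopy.} Conversely, each generator of the toroidal pseudo Reidemeister equivalence is supported in a disc in $S^1\times S^1$; lifting it through the construction of Definition~\ref{ttoruslift} inside the corresponding ball in $\mathcal{T}\times I$ realizes it by an explicit arc-and-disc isotopy confined to the interior of the thickened torus (for $R_1,R_2,R_3$ this is the familiar spatial realization of Reidemeister moves; for $PR_1,PR_2,PR_3,PR_3'$ one moves the rigid discs and the arcs emanating from them exactly as in the planar lift). Surface isotopies of the diagram on $S^1\times S^1$ lift to ambient isotopies of $\mathcal{T}\times I$ fixing the $I$-direction. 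Composing these local isotopies along a finite sequence of moves connects the two lifts, establishing the "if" direction. The oriented case is identical, carrying orientations through each local model.

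The main obstacle, and the only place where the torus genuinely differs from the plane or the annulus, is the global step in the first implication: one must check that \emph{no extra moves are needed} beyond the local Reidemeister and pseudo Reidemeister moves, i.e. that the nontrivial topology of $\mathcal{T}$ (two independent essential curves, nonabelian behaviour of based loops) does not introduce a genuinely global equivalence that is invisible in any single chart. This is handled by noting that the ambient isotopy of $\mathcal{T}\times I$, once put in general position with respect to $\pi$, is partitioned into compact time-subintervals on each of which all activity is confined to a ball; the finitely many transition times each contribute one local move, and "sliding a strand around a handle of the torus" is by definition already a surface isotopy of the diagram, not a new move. Care is only needed to confirm that a rigid disc can always be kept in generic position with respect to $\pi$ throughout the isotopy and that its finitely many critical interactions with arcs are exactly the $PR$-type degeneracies — this is where one leans on Definitions~\ref{ttoruslift} and~\ref{ttliftisotopy} and on the fact, recorded just before Definition~\ref{ttoruslift}, that $\mathcal{T}\times I$ is the complement of the Hopf link in $S^3$, so that the same transversality technology used in $S^3$ applies.
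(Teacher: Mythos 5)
Your proposal is correct and follows essentially the paper's own route: the paper states Theorem~\ref{isoptt} without a written proof, asserting it ``in analogy to the spatial and annular pseudo links'' (Proposition~\ref{prop:planarlift} and Theorem~\ref{isopST}), and your general-position projection argument, the decomposition of a generic arc-and-disc isotopy into local codimension-one events realizing the moves of Figure~\ref{reid}, and the lifting of each move back into $\mathcal{T}\times I$ is exactly the standard argument that analogy stands for. Your added remark that sliding a strand around a handle is absorbed into surface isotopy of the toroidal diagram correctly identifies the only genuinely new (global) point in passing from the annulus to the torus.
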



\subsection{Toroidal pseudo links as mixed pseudo links}

In this subsection we view the thickened torus as being homeomorphic to the complement  of the Hopf link, ${\rm H}$, in the three-sphere $S^3$, see middle illustration of Figure~\ref{tor}. Since we fix our torus $\mathcal{T}$ and subsequently  the thickened torus $\mathcal{T} \times I$, it is crucial that in this theory we have the components of ${\rm H}$ marked, say with $m$ and $l$, as depicted in Figure~\ref{tor}. Then, as in the case of annular pseudo links, an (oriented) pseudo link $K$ in $\mathcal{T} \times I$ can be represented by an (oriented) mixed link, whose  fixed part is the (marked) Hopf link ${\rm H}$, representing the thickened torus. We have the following:

\begin{definition}
 An (oriented)  ${\rm H}$-\textit{mixed pseudo link} in $S^{3}$ is an (oriented) spatial  pseudo link ${\rm H} \cup K$  which contains the marked Hopf link ${\rm H}$, as a  point-wise fixed sublink, and the sublink $K$ resulting by removing ${\rm H}$, as the \textit{moving part} of the mixed pseudo link, such that there are no precrossing discs between the fixed and the moving part.  
\end{definition}

\noindent For an example view the right-hand illustration of Figure~\ref{tor}.

\begin{figure}[H]
\begin{center}
\includegraphics[width=6in]{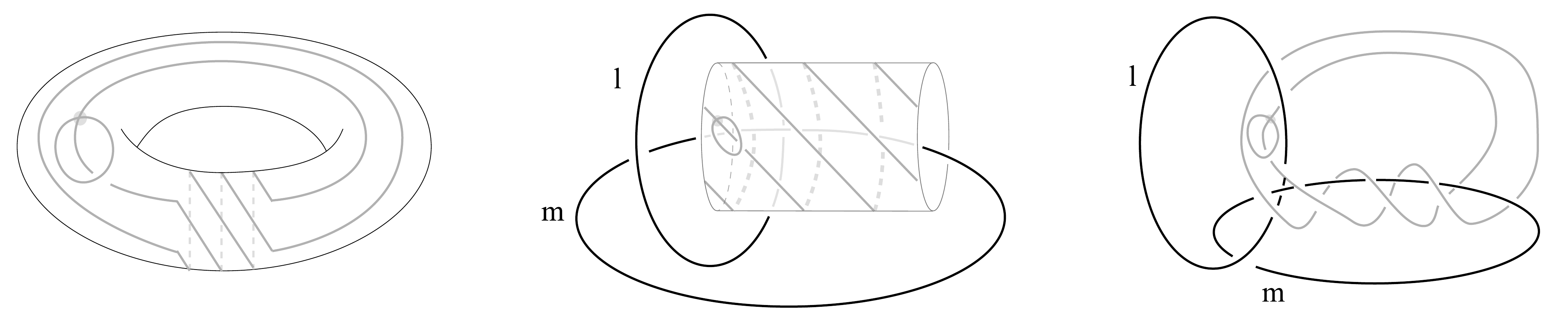}
\end{center}
\caption{A toroidal pseudo link and its corresponding ${\rm H}$-mixed pseudo link.}
\label{tor}
\end{figure}

Then, by the same reasoning as for ${\rm O}$-mixed pseudo links, we obtain the following:

\begin{theorem}\label{isopmixedtt}
Isotopy classes of (oriented) pseudo links in the thickened torus are in bijection with isotopy classes of (oriented) ${\rm H}$-mixed pseudo links in $S^{3}$, via isotopies that keep ${\rm H}$ point-wise fixed.
\end{theorem}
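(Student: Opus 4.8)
The plan is to mirror, essentially verbatim, the argument behind Theorem~\ref{isopmixed}, replacing the rôle of the complementary solid torus (represented by the unknot ${\rm O}$) by that of the Hopf link complement (represented by the marked Hopf link ${\rm H}$). The starting point is the classical mixed-link correspondence of \cite{LR1} (see also \cite{La1}): since the thickened torus $\mathcal{T}\times I$ is homeomorphic to the complement of ${\rm H}$ in $S^3$, isotopy classes of (oriented) links in $\mathcal{T}\times I$ are in bijection with isotopy classes of (oriented) mixed links ${\rm H}\cup K$ in $S^3$ under ambient isotopies keeping ${\rm H}$ point-wise fixed. Here it is essential to use the \emph{marked} Hopf link, with components labelled $m$ and $l$, and to fix ${\rm H}$ point-wise rather than merely setwise: this is precisely what pins down the identification $S^3\setminus{\rm H}\cong\mathcal{T}\times I$ and rules out the extra ambient symmetries of $(S^3,{\rm H})$ (the exchange of the two solid tori, and the hyperelliptic-type involutions) that would otherwise identify distinct links in the thickened torus. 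I would state this step carefully, since it is the one place where the toroidal case is genuinely more delicate than the annular one.

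Next I would upgrade the correspondence to the pseudo setting using the spatial lift of Definition~\ref{spatiallift} together with Definitions~\ref{ttoruslift} and~\ref{ttliftisotopy}. Given a pseudo link in the thickened torus, its embedded arcs-and-discs model lies in the interior of $\mathcal{T}\times I\hookrightarrow S^3\setminus{\rm H}$, so adjoining ${\rm H}$ produces an ${\rm H}$-mixed pseudo link in $S^3$; because the precrossing discs are rigid and embedded in the interior of $\mathcal{T}\times I$, there are automatically no precrossing discs between the fixed part ${\rm H}$ and the moving part $K$, matching exactly the definition of an ${\rm H}$-mixed pseudo link. Conversely, an ${\rm H}$-mixed pseudo link is an $S^3$ pseudo link whose fixed part ${\rm H}$ is unmoved, so its moving part $K$ together with its rigid precrossing discs lies entirely in $S^3\setminus{\rm H}\cong\mathcal{T}\times I$, yielding a pseudo link in the thickened torus.

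Finally I would check that the two assignments are mutually inverse on isotopy classes. An isotopy of arcs and discs confined to the interior of $\mathcal{T}\times I$ is the same data as an ambient isotopy of $S^3$ that fixes ${\rm H}$ point-wise and preserves the disc/arc structure, and vice versa; hence the bijection of underlying objects descends to a bijection of isotopy classes. The oriented statement follows by carrying orientations along unchanged throughout, since all moves and isotopies involved are orientation-respecting. One may also note in passing that composing with Theorem~\ref{isoptt} then relates these isotopy classes to toroidal pseudo link diagrams modulo pseudo Reidemeister equivalence, but this is not needed for the present statement.

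The main obstacle I anticipate is not the combinatorics but making the first step airtight: one must be certain that the mixed-link theorem of \cite{LR1} is being applied with ${\rm H}$ marked and point-wise fixed, so that the correspondence is a genuine bijection and not merely a surjection up to the symmetry group of the thickened torus. Once that is secured, grafting the rigid precrossing discs onto the classical correspondence is routine and formally identical to the ${\rm O}$-mixed case treated in Theorem~\ref{isopmixed}.
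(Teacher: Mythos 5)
Your proposal is correct and follows essentially the same route as the paper: the paper obtains Theorem~\ref{isopmixedtt} ``by the same reasoning as for ${\rm O}$-mixed pseudo links,'' i.e.\ by transplanting the classical mixed-link correspondence of \cite{LR1} to the Hopf link complement $S^3\setminus{\rm H}\cong\mathcal{T}\times I$ (with ${\rm H}$ marked and point-wise fixed) and grafting on the rigid precrossing discs via Definitions~\ref{ttoruslift} and~\ref{ttliftisotopy}. Your added emphasis on why the marking of $m,l$ and the point-wise (rather than setwise) fixing of ${\rm H}$ are needed is a useful, and consistent, elaboration of a point the paper only states implicitly.
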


We now shift our focus back to the diagrammatic approach. From now on we consider ${\rm H}$ to be a fixed diagram of the Hopf link on a projection plane. 

\begin{definition}
An (oriented)  ${\rm H}$-\textit{mixed pseudo link diagram} is an (oriented) regular projection ${\rm H}\cup D$ of an (oriented)  ${\rm H}$-mixed pseudo link ${\rm H}\cup K$ on the plane of ${\rm H}$, such that some double points are precrossings, as projections of the precrossing discs of ${\rm H} \cup K$, there are no precrossings between the fixed and the moving part, and the rest of the crossings, which are either crossings of arcs of the moving part or \textit{mixed crossings} between arcs of the moving and the fixed part, are endowed with over/under information.  
\end{definition}

Consider now an isotopy of an (oriented) ${\rm H}$-mixed pseudo link ${\rm H} \cup K$ in $S^{3}$  keeping  ${\rm H}$ point-wise  fixed. Combining  the equivalence of planar pseudo link diagrams (recall Section~\ref{planarpk}) and the theory of mixed links (recall \cite{LR1}) we obtain that, in terms of ${\rm H}$-mixed pseudo link diagrams, this isotopy translates into a sequence of local moves comprising planar isotopy and the classical, pseudo  and  mixed Reidemeister moves (recall Figures~\ref{reid} and~\ref{mpr}). Note that, as supposed to the theory of ${\rm O}$-mixed pseudo links, the fixed part of the ${\rm H}$-mixed pseudo links now involves a crossing, which a moving strand can freely cross, giving rise to an extra mixed Reidemeister 3 move as illustrated in Figure~\ref{mr3}.

\begin{figure}[H]
\begin{center}
\includegraphics[width=2.6in]{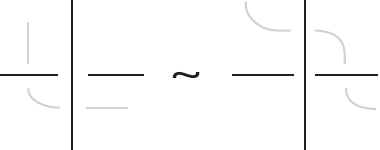}
\end{center}
\caption{A mixed $R_3$ move.}
\label{mr3}
\end{figure}

The above lead to the discrete diagrammatic equivalence of ${\rm H}$-mixed pseudo links:

\begin{theorem} \label{Hmixedreid}
[The ${\rm H}$-mixed pseudo Reidemeister equivalence]\label{reidplinktt}
Two (oriented) ${\rm H}$-mixed pseudo links in $S^{3}$ are isotopic if and only if any two (oriented)  ${\rm H}$-mixed pseudo link diagrams of theirs  differ by planar isotopies, a finite sequence of the classical and the pseudo Reidemeister moves, as exemplified in Figure~\ref{reid}, for the moving parts of the mixed pseudo links, and moves that involve the fixed and the moving parts, the {\rm mixed Reidemeister moves}, comprising the moves $MR_2, MR_3, MPR_3$, exemplified in Figures~\ref{mpr} and~\ref{mr3}.
\end{theorem}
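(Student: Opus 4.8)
The plan is to follow closely the argument behind Theorem~\ref{Omixedreid}, upgrading it to accommodate the fact that the fixed part ${\rm H}$ is now a two-component link carrying one crossing. By Theorem~\ref{isopmixedtt} it suffices to translate an ambient isotopy of $S^{3}$ that keeps ${\rm H}$ point-wise fixed into a finite sequence of local diagrammatic moves on regular projections onto the plane of ${\rm H}$, and conversely.

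\emph{From isotopy to moves.} First I would put the isotopy in general position with respect to the projection and with respect to ${\rm H}$, keeping the precrossing discs rigid and, by a small perturbation, disjoint from ${\rm H}$ throughout; this is possible since by definition no precrossing disc touches the fixed part. The isotopy then decomposes into finitely many elementary stages, each happening in a ball that is either disjoint from ${\rm H}$ or meets ${\rm H}$. In the first case the analysis is exactly the one underlying Proposition~\ref{prop:planarlift} and the planar pseudo theory of Section~\ref{planarpk}: a stage not involving a precrossing disc is a classical Reidemeister move $R_1,R_2,R_3$, while a stage in which a moving arc interacts with a rigid precrossing disc is one of the pseudo Reidemeister moves $PR_1,PR_2,PR_3,PR_3^{\prime}$, the rigidity of the disc forcing precisely these (cf.\ the singular-link correspondence of Remark~\ref{singt}). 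In the second case, since ${\rm H}$ is fixed, a moving sheet sweeps across a strand or across the crossing of ${\rm H}$; by the mixed-link Reidemeister theorem of \cite{LR1} applied to the fixed sublink ${\rm H}$, a sweep across a single strand of ${\rm H}$ is an $MR_2$ move, a sweep past a point where two strands cross (two moving, or one moving and one fixed) is an $MR_3$, and---the feature special to ${\rm H}$---a sweep of a moving arc through the unique crossing of the marked Hopf link is the extra mixed $R_3$ move of Figure~\ref{mr3}. Finally, a sweep in which the arc emanating from a rigid precrossing disc is dragged past a strand of ${\rm H}$ is, by rigidity, an $MPR_3$ move, since every resolution of the precrossing turns it into an honest $MR_3$ and hence no new move is needed. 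Together with planar isotopy this produces the stated list.

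\emph{From moves to isotopy.} Conversely, each listed move is visibly realized by an isotopy of the spatial ${\rm H}$-mixed pseudo link keeping ${\rm H}$ point-wise fixed: the classical and pseudo Reidemeister moves on the moving part are supported in balls disjoint from ${\rm H}$ and are isotopies of the lift by Proposition~\ref{prop:planarlift}; the mixed moves $MR_2,MR_3,MPR_3$ of Figure~\ref{mpr} and the Hopf-crossing $MR_3$ of Figure~\ref{mr3} are supported in balls in which ${\rm H}$ appears as one or two fixed unknotted strands, crossing at most once, across which a moving arc, or an arc emanating from a rigid disc, is pushed; such a push is an ambient isotopy leaving ${\rm H}$ fixed. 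This gives the ``if'' direction and completes the equivalence.

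\emph{Main obstacle.} The delicate point is completeness of the move list near the fixed crossing of ${\rm H}$: one must ensure that allowing a moving strand to pass through the Hopf-link crossing, combined with $MR_2,MR_3,MPR_3$, genuinely captures \emph{all} local configurations produced by an ${\rm H}$-fixing isotopy, and in particular that no further move mixing a precrossing disc with the fixed crossing is required. I expect this to reduce, as in \cite{LR1}, to a finite case check on how a generic sheet of the moving part can cross the $1$-complex ${\rm H}$ together with the rigid discs; it is the rigidity of the discs that keeps the count down to $MPR_3$ plus the single new $MR_3$ of Figure~\ref{mr3}.
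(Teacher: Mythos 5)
Your proposal is correct and follows essentially the same route as the paper: it combines the planar pseudo Reidemeister equivalence with the mixed-link Reidemeister theorem of \cite{LR1} for the fixed sublink ${\rm H}$, and singles out the extra mixed $R_3$ move of Figure~\ref{mr3} arising because the fixed part now carries a crossing. Your added general-position decomposition and the remark that rigidity of the precrossing discs forces only $MPR_3$ (no new move mixing a disc with the Hopf crossing) merely flesh out details the paper leaves implicit.
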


Further, Theorems~\ref{isoptt},~\ref{isopmixedtt} and~\ref{Hmixedreid} culminate in the following diagrammatic equivalence:

\begin{theorem}\label{tortoHmixed}
Two (oriented) toroidal pseudo link diagrams are (oriented) pseudo Reidemeister equivalent if and only if  any two corresponding ${\rm H}$-mixed pseudo link diagrams of theirs are ${\rm H}$-mixed pseudo Reidemeister equivalent.
\end{theorem}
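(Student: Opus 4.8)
The plan is to deduce the statement by concatenating the three bijections already established, namely Theorems~\ref{isoptt}, \ref{isopmixedtt} and~\ref{Hmixedreid}, in exact analogy with the proof of Theorem~\ref{annulartoOmixed} in the ${\rm O}$-mixed case. Fix two toroidal pseudo link diagrams $D_1$ and $D_2$ on $\mathcal{T}$, together with their lifts $L_1$ and $L_2$ in the thickened torus $\mathcal{T}\times I$ (Definition~\ref{ttoruslift}), and let ${\rm H}\cup L_1$ and ${\rm H}\cup L_2$ be the ${\rm H}$-mixed pseudo links in $S^3$ obtained by viewing $\mathcal{T}\times I$ as the complement of the marked Hopf link ${\rm H}$. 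First I would observe that, by Theorem~\ref{isoptt}, $D_1$ and $D_2$ are (oriented) pseudo Reidemeister equivalent if and only if $L_1$ and $L_2$ are isotopic pseudo links in $\mathcal{T}\times I$. Next, by Theorem~\ref{isopmixedtt}, the latter holds if and only if ${\rm H}\cup L_1$ and ${\rm H}\cup L_2$ are isotopic as ${\rm H}$-mixed pseudo links in $S^3$ via isotopies fixing ${\rm H}$ point-wise. Finally, by Theorem~\ref{Hmixedreid}, this in turn is equivalent to any two ${\rm H}$-mixed pseudo link diagrams of ${\rm H}\cup L_1$ and ${\rm H}\cup L_2$ being related by planar isotopies together with a finite sequence of classical, pseudo and mixed Reidemeister moves, that is, being ${\rm H}$-mixed pseudo Reidemeister equivalent. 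Composing these three ``if and only if'' statements yields the theorem.

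The step that needs care is the compatibility of the ``corresponding'' diagrams along this chain: I must check that projecting $L_i$ onto the outer toroidal boundary $S^1\times S^1$, as in Theorem~\ref{isoptt}, and then carrying the resulting toroidal diagram to the plane of ${\rm H}$ actually produces an ${\rm H}$-mixed pseudo link diagram of ${\rm H}\cup L_i$ in the sense of the definition preceding Theorem~\ref{Hmixedreid}, and conversely that every ${\rm H}$-mixed pseudo link diagram arises this way up to ${\rm H}$-mixed pseudo Reidemeister moves. Concretely, one fixes a homeomorphism of $\mathcal{T}\times I$ with the Hopf-link complement sending the meridian and longitude of the standard torus to the marked components of ${\rm H}$, pushes the curves $L_i$ into a collar and projects, and verifies that the precrossing discs survive this process with no precrossings created between the moving part and ${\rm H}$, exactly as required by the definition. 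Since the lift of a toroidal diagram already keeps its precrossing discs away from the toroidal boundary, and since the homeomorphism is fixed once the components of ${\rm H}$ are marked, this correspondence is well defined at the level of diagrams; the diagrammatic moves used in Theorem~\ref{isoptt} then match one-for-one with planar isotopies and classical/pseudo Reidemeister moves on the moving part, while isotopies of $L_i$ across the toroidal boundary correspond to the mixed moves $MR_2, MR_3, MPR_3$, including the additional mixed $R_3$ move of Figure~\ref{mr3} produced by the crossing inside ${\rm H}$.

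I expect the main obstacle to be bookkeeping rather than anything deep: one must track the two kinds of essential loops of a toroidal diagram. A toroidal diagram can wind both longitudinally and meridionally, so when it is transported into the Hopf-link complement one has to ensure that a meridional winding of $L_i$ around one component of ${\rm H}$ and a longitudinal winding around the other are both faithfully recorded by mixed crossings, with no ambiguity introduced by the choice of projection plane for ${\rm H}$. This is precisely where the marking of the components of ${\rm H}$ is essential, and it is the reason Theorem~\ref{tortoHmixed} is phrased in terms of the marked Hopf link; once the marking is in place, the remainder of the argument is the routine composition of equivalences described above, entirely parallel to the ${\rm O}$-mixed case of Theorem~\ref{annulartoOmixed}.
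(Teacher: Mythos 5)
Your proposal is correct and follows essentially the same route as the paper, which obtains Theorem~\ref{tortoHmixed} precisely by composing the three equivalences of Theorems~\ref{isoptt}, \ref{isopmixedtt} and~\ref{Hmixedreid}, in parallel with the ${\rm O}$-mixed case of Theorem~\ref{annulartoOmixed}. The extra care you take with the compatibility of corresponding diagrams (fixing the homeomorphism to the marked Hopf link complement and checking that precrossing discs project without creating mixed precrossings) is a sound elaboration of details the paper leaves implicit.
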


\begin{remark}\rm
If we exclude $PR_1$-moves of Figure~\ref{reid} for the moving part of a mixed pseudo link and if we change the precrossings to singular crossings (as in Remark~\ref{singt}), we obtain from Theorem\ref{tortoHmixed} the analogue of the Reidemeister theorem for toroidal singular links in terms of mixed links.
\end{remark}


\subsection{Toroidal inclusions} \label{toroidalinclusions}

 Toroidal pseudo link diagrams with no precrossings can be viewed as link diagrams in the torus, and  Reidemeister equivalence in the torus is compatible with toroidal pseudo link equivalence. Thus, there is an injection of toroidal links in toroidal pseudo links. We note that toroidal link diagrams modulo the Reidemeister equivalence correspond bijectively to isotopy classes of links in the thickened torus. So, from the lift of pseudo links (Definition~\ref{ttoruslift}) and Theorem~\ref{isoptt} we have an injection of  links in the thickened torus into pseudo links the thickened torus. 

The inclusion of a disc in the torus induces an injection of the theory of (planar) pseudo links into the theory of toroidal pseudo links. 
Further, the inclusion of the annulus in the torus induces an injection of the theory of annular pseudo links into the theory of toroidal pseudo links.  View Figure~\ref{planarannulartorus}. Similarly, from the above and by  Theorem~\ref{isoptt}, the inclusion of a three-ball in the thickened torus induces an injection of the theory of spatial pseudo links into the theory of pseudo links in the thickened torus. 

\begin{figure}[H] 
\begin{center} 
\includegraphics[width=6in]{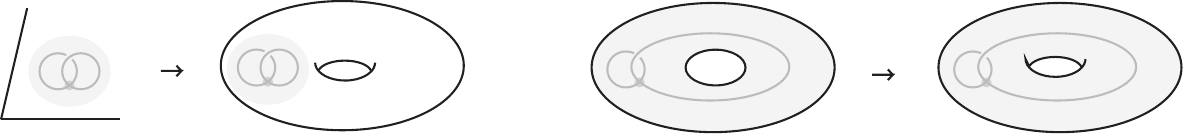} 
\end{center} 
\caption{Inclusion relations: a disc in the torus and  an annulus in the torus.} 
\label{planarannulartorus} 
\end{figure} 

Moreover, using the description of the thickened torus as the gluing of two thickened annuli and the inclusion of a thickened annulus (that is, a solid torus) in the thickened torus, the above observations and Theorem~\ref{isoptt} lead to an injection of the theory of pseudo links in the solid torus into the theory of pseudo links in the thickened torus. 
 
On the other hand, the inclusion of the thickened torus in the solid torus induces a surjection of the theory of  pseudo links in the  thickened torus onto pseudo links in the solid torus, where the meridional windings trivialize.  View Figure~\ref{inclusion-ttor-st}. In view of Theorem\ref{isoptt}, this surjection induces also a surjection of the theory of toroidal pseudo links onto the theory of annular pseudo links. 

\begin{figure}[H]
\begin{center}
\includegraphics[width=5.2in]{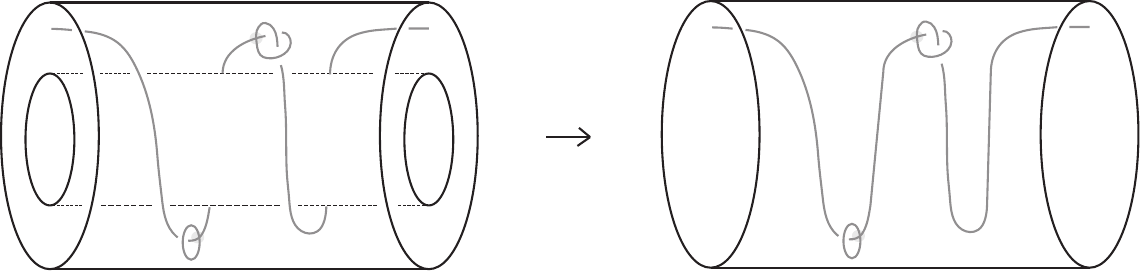}
\end{center}
\caption{Inclusion relation of the thickened torus in the solid torus.}
\label{inclusion-ttor-st}
\end{figure}

Finally,  the inclusion of the thickened torus in an enclosing 3-ball as in Figure~\ref{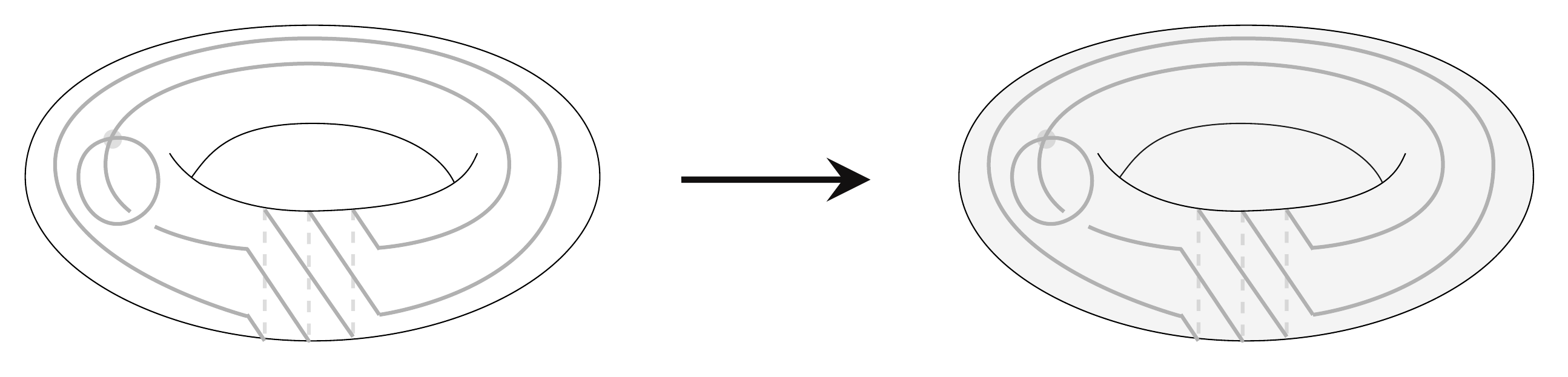} induces a surjection of the theory of  pseudo links in the  thickened torus onto spatial pseudo links, where the meridional and longitudinal windings trivialize. By Theorem\ref{isoptt}, this surjection induces  a surjection of the theory of toroidal pseudo links onto the theory of planar pseudo links. 

\begin{figure}[H] 
\begin{center} 
\includegraphics[width=4.5in]{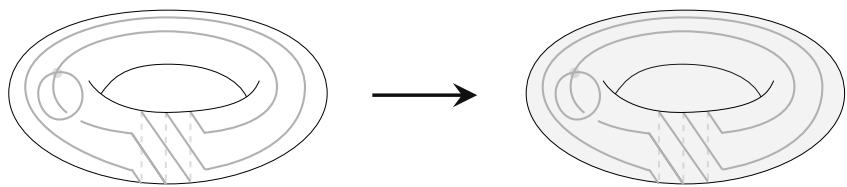} 
\end{center} 
\caption{Inclusion relation of the thickened torus in a 3-ball.} 
\label{inclusion-tor-disk.pdf} 
\end{figure} 

In terms of ${\rm H}$-mixed pseudo links, the inclusion of the thickened torus in the solid torus corresponds to omitting the fixed curve $m$, so we are left with an ${\rm O}$-mixed pseudo link. Further, the inclusion of the thickened torus in a 3-ball corresponds, in terms of ${\rm H}$-mixed pseudo links, to omitting the fixed part ${\rm H}$ entirely, so we are left with a planar pseudo link. 

We end this subsection with another remark. 

\begin{remark}
Viewing the torus as the gluing of two annuli, one  can try to project toroidal pseudo link diagrams to one of the two annular surfaces, say the upper one. 
When projecting a toroidal pseudo link diagram that cannot be isotoped within an annulus, the result is an annular pseudo link that may include an additional type of crossings, namely, {\it virtual crossings}, which are not real crossings  (cf. \cite{LK2}). This occurs because a curve wrapping around the meridian of the torus  projects in the annulus in such a way that it appears to cross another arc, even though no such crossing exists on the initial toroidal pseudo link diagram. The result is an \textit{annular virtual pseudo link diagram}. A comparative example is illustrated in Figure~\ref{tortref}, where the virtual crossing in the right-hand illustration is depicted as an encircled flat crossing.
\end{remark}

\begin{figure}[H]
\begin{center}
\includegraphics[width=4.6in]{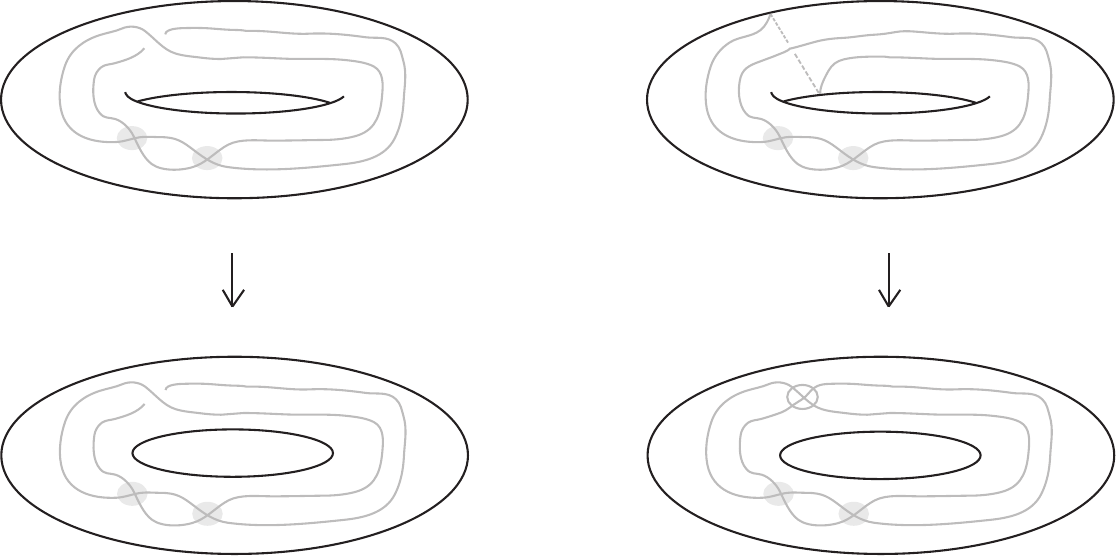}
\end{center}
\caption{Two toroidal pseudo trefoil knots projected on the annulus and the appearance of a virtual crossing.}
\label{tortref}
\end{figure}


\subsection{The weighted resolution set for toroidal pseudo links}

In this subsection we extend to toroidal pseudo links the notion of the weighted resolution set, defined first in \cite{HJMR}  for planar pseudo links (Definition~\ref{wrs}) and extended in Definition~\ref{wrsannularmixed} for annular pseudo links,  and we prove that it is an invariant of theirs. Indeed, we define:

\begin{definition}\label{restormixed}\rm 
 A {\it resolution} of a toroidal pseudo link diagram \( K \) is a specific assignment of crossing types (positive or negative) for every precrossing in \( K \). The result is a link diagram in the torus, which lifts to a link in the thickened torus. Recall Subsection~\ref{toroidalinclusions}. 
 
 Similarly, an {\it ${\rm H}$-resolution} of the ${\rm H}$-mixed pseudo link diagram \(\rm H \cup K \) is a specific assignment of crossing types (positive or negative) for every precrossing in \(\rm H \cup K \). The result is an ${\rm H}$-mixed pseudo link in $S^3$, representing uniquely the lift of the resolution of \( K \) in $\mathcal{T} \times I$.
 \end{definition}

 \begin{definition}\label{wrstormixed}\rm 
The {\it toroidal weighted resolution set} or {\it toroidal WeRe set}, of a toroidal  pseudo link diagram \( K \) is a collection of ordered pairs \((K_i, p_{K_i})\), where \( K_i \) represents a resolution of \( K \), and \( p_{K_i} \) denotes the probability of obtaining  from \( K \) the equivalence class of \( K_i \)  through a random assignment of crossing types, with equal likelihood for positive and negative crossings. 
Similarly, the {\it ${\rm H}$-weighted resolution set} or {\it ${\rm H}$-WeRe set}, of an ${\rm H}$-mixed pseudo link diagram  \({\rm H} \cup K \)  is a collection of ordered pairs \(({\rm H} \cup K_i, p_{K_i})\), where \( {\rm H} \cup K_i \) represents a resolution of \({\rm H} \cup K \), and \( p_{K_i} \) denotes the probability of obtaining from \( {\rm H} \cup K \)  the equivalence class of  \( {\rm H} \cup K_i \)  through a random assignment of crossing types, with equal likelihood for positive and negative crossings.
\end{definition}

The definitions above lead to the following:

\begin{theorem} \label{th:toroidalWeRe}
The toroidal WeRe set is an invariant of toroidal pseudo links. Similarly, the \,  ${\rm H}$-WeRe set is an invariant of \, ${\rm H}$-mixed pseudo links. Subsequently, any invariant of links in the thickened torus resp. of ${\rm H}$-mixed links, applied on the elements of a toroidal resp. an ${\rm H}$- WeRe set, induces also an invariant set of the toroidal resp. the ${\rm H}$-mixed pseudo link.
\end{theorem}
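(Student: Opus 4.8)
The plan is to mirror the proof of Theorem~\ref{th:annularWeRe} verbatim, replacing the annulus by the torus and the fixed unknot ${\rm O}$ by the marked Hopf link ${\rm H}$ throughout. First I would establish invariance of the toroidal WeRe set: by Definition~\ref{torpl} a toroidal pseudo link diagram and any of its resolutions live in the torus $\mathcal{T}$, and a toroidal Reidemeister equivalence between two pseudo diagrams consists of surface isotopies together with the moves $R_1,R_2,R_3,PR_1,PR_2,PR_3,PR_3^{\prime}$. Each of the classical moves $R_1,R_2,R_3$ and each pseudo move $PR_1,PR_2,PR_3,PR_3^{\prime}$ is compatible, in the sense of \cite{HJMR}, with the corresponding move(s) performed on an arbitrary resolution: resolving a precrossing and then applying the move gives the same result, up to toroidal Reidemeister equivalence of the resolved diagrams, as applying the move and then resolving. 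Hence the multiset of equivalence classes of resolutions, together with their probabilities (which depend only on the number of precrossings, unchanged by $R_i$ and by $PR_2,PR_3,PR_3^{\prime}$, and for which $PR_1$ adds one precrossing whose two resolutions yield the same class by $R_1$, so the total probability mass reassembles correctly), is unchanged. This is exactly the argument of Theorem~\ref{wereinv}, now carried out for diagrams in $\mathcal{T}$ rather than in the plane, and the resolutions lift, by Subsection~\ref{toroidalinclusions} and Theorem~\ref{isoptt}, to well-defined isotopy classes of links in $\mathcal{T}\times I$.

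Next I would treat the ${\rm H}$-WeRe set. Since an ${\rm H}$-mixed pseudo link diagram ${\rm H}\cup K$ is by definition a planar pseudo link diagram (with some double points marked as precrossings, but none on or between ${\rm H}$), Theorem~\ref{wereinv} already gives invariance of its weighted resolution set under $R_1,R_2,R_3,PR_1,PR_2,PR_3,PR_3^{\prime}$ applied to the moving part. By Theorem~\ref{Hmixedreid} it therefore suffices to check invariance under the mixed Reidemeister moves $MR_2,MR_3$ (Figure~\ref{mpr}), the mixed move across the crossing of ${\rm H}$ (Figure~\ref{mr3}), and $MPR_3$. The moves $MR_2$ and $MR_3$ and the extra mixed $R_3$ of Figure~\ref{mr3} involve no precrossings at all, so they act identically on $K$ and on every resolution, preserving both the resolution multiset and the probabilities. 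For $MPR_3$: whichever of the two resolutions is chosen for the precrossing involved, the move becomes an $MR_3$ (or the mixed $R_3$ of Figure~\ref{mr3}) on the resolved ${\rm H}$-mixed link diagram, which does not change its isotopy class; so each resolution class is matched, with the same probability, on the two sides of the move. Thus the ${\rm H}$-WeRe set is an invariant of ${\rm H}$-mixed pseudo links, and by Theorem~\ref{isopmixedtt} its elements are canonically the lifts to $\mathcal{T}\times I$ of the resolutions of $K$.

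Finally, for the induced invariants: any isotopy invariant of links in the thickened torus (e.g.\ those in \cite{HK,Tu,La2}), evaluated on each link $K_i$ appearing in the toroidal WeRe set, produces from an invariant multiset a new multiset of (value, probability) pairs; since the underlying WeRe set is already invariant under all the toroidal Reidemeister moves, so is its image under such an evaluation. The same reasoning applies to invariants of ${\rm H}$-mixed links applied to the elements of an ${\rm H}$-WeRe set, using Theorem~\ref{Hmixedreid}. I expect no serious obstacle here; the only point requiring genuine care—the main ``obstacle''—is verifying compatibility of $MPR_3$ (and, for the toroidal case, of $PR_1$ together with the surface isotopies that can slide a strand all the way around a longitude or a meridian) with resolution, i.e.\ confirming that the diagrammatic bookkeeping of probabilities survives moves that change the number of precrossings or that exploit the nontrivial topology of $\mathcal{T}$; but these reduce, precrossing by precrossing, to the already-established planar statement of Theorem~\ref{wereinv} applied inside the relevant chart, exactly as in the proof of Theorem~\ref{th:annularWeRe}.
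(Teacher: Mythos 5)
Your proposal is correct and follows essentially the same route as the paper: invariance of the toroidal WeRe set via compatibility of the classical and pseudo Reidemeister moves with resolutions, invariance of the ${\rm H}$-WeRe set by viewing ${\rm H}\cup K$ as a planar pseudo link diagram (Theorem~\ref{wereinv}) and checking the mixed moves $MR_2$, $MR_3$, the extra mixed $R_3$ of Figure~\ref{mr3} and $MPR_3$ exactly as in Theorem~\ref{th:annularWeRe}, and then passing to induced invariants. Your write-up is in fact somewhat more detailed than the paper's (explicit probability bookkeeping for $PR_1$ and the resolution-by-resolution treatment of $MPR_3$), the only cosmetic slip being the citation of solid torus invariants \cite{HK,Tu,La2} where thickened-torus invariants such as \cite{P,Zenkina} are meant.
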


\begin{proof}
The proof follows the same approach as in the cases of planar and annular resp. ${\rm O}$-mixed pseudo link diagrams (recall Theorem~\ref{wereinv} of \cite{HJMR} and Theorem~\ref{th:annularWeRe}). Indeed, the toroidal WeRe set is invariant under the standard and pseudo Reidemeister moves of  Definition~\ref{torpl} since these moves are compatible with the corresponding moves with no precrossings. 
Further, since an ${\rm H}$-mixed pseudo link diagram \({\rm H} \cup K \) is by definition a planar pseudo link diagram, thus its ${\rm H}$-weighted resolution set is an invariant of \({\rm H} \cup K \) by Theorem~\ref{wereinv}.

Moreover, any invariant of links in the thickened torus (e.g. \cite{P, Zenkina}) applied to the elements of the toroidal WeRe set resp. ${\rm H}$-WeRe set will respect the same set of local equivalence moves, hence will preserve the WeRe sets, hence it induces an invariant set of the toroidal resp. ${\rm H}$-mixed pseudo link. 
\end{proof} 

\begin{example}
The WeRe set of the toroidal pseudo trefoil knot illustrated on the left of Figure~\ref{tortref} is the same as the WeRe set of the same pseudo trefoil knot viewed as annular. However, the illustration on the right-hand side of Figure~\ref{tortref} is more interesting as it is purely toroidal. So, in the projection on the annulus it requires the presence of a virtual crossing, recall Subsection~\ref{toroidalinclusions}. For  illustration purposes only, we present in Figure~\ref{tortreftor} the resolution set of this toroidal pseudo knot  as annular with a virtual crossing, depicted as an encircled flat crossing. Note that the isotopy moves of virtual knot theory do not apply here.

\begin{figure}[H]
\begin{center}
\includegraphics[width=5in]{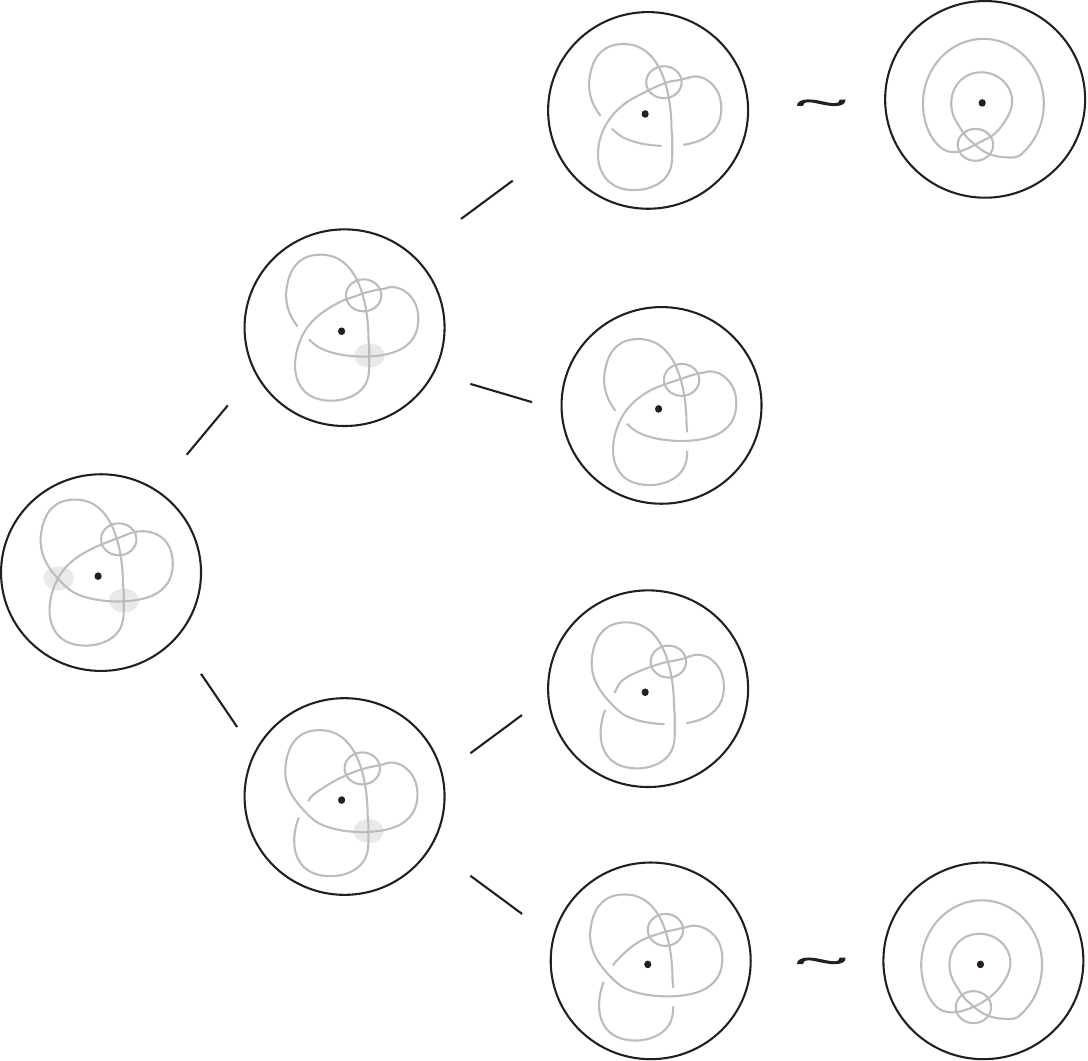}
\end{center}
\caption{The resolution set of a toroidal pseudo trefoil knot.}
\label{tortreftor}
\end{figure}
\end{example}


\bigbreak


\noindent {\bf Conclusions} 
The transition from planar to  annular and then to toroidal setting introduces increasing complexity and additional factors that must be taken into account. This study provides deeper insights into the interactions among planar,  annular and  toroidal pseudoknots and the topological properties of their ambient spaces.

\end{document}